%% file: main.tex
\documentclass{amsart}
\usepackage{my_preamble}

\title[Degenerations of moduli stacks of sheaves]{Degenerations of shifted symplectic moduli stacks of sheaves}
\author{Matthew Huynh}
\date{\today}

\begin{document}

\begin{abstract}
    Let $S$ be a smooth, affine noetherian $\CC$-scheme, and let $X/S$ be a smooth, quasi-projective family of Calabi--Yau $d$-folds.
    We prove that the derived moduli stack of flat families of properly supported coherent sheaves on $X/S$ has a relative $(2-d)$-shifted symplectic structure, by generalizing a construction of \cite{brav_dyckerhoff_2}.
    The main points are to show that the category $\rm{IndCoh}(X)$ is a smooth, compact $\rm{QCoh}(S)$-module and to identify our moduli stack of interest as a substack of the moduli of pseudo-perfect objects in $\rm{IndCoh}(X)$.
    Finally, we discuss two examples of families of smooth, quasi-projective Calabi--Yau threefolds that are potentially useful for applications in enumerative geometry.
\end{abstract}

\maketitle

\tableofcontents


\setcounter{section}{-1}
\input{results/intro}
\input{results/linear_categories}
\input{results/moduli_of_objects}
\input{results/forms}
\input{results/sheaves.tex}

\section*{Statement on use of AI}
The author did not use generative AI for assistance in the research nor writing of this note.

\section*{Acknowledgments}
The author thanks Ceyhun Elmac{\i}o\u{g}lu, Sonja Farr, Joj Helfer, Hyeonjun Park, Wyatt Reeves, Jemin You, and Frank Zhang for helpful discussions during various stages of writing this note.
The author is supported by KIAS Individual Grant MG111001. 

\bibliography{references2}
\footnotesize{
    \vspace{\baselineskip}
    \noindent\textsc{School of Mathematics, Korea Institute for Advanced Study, 85 Hoegiro, Dongdaemun-gu, Seoul 02455, Republic of Korea} \par\nopagebreak
    \vspace{\baselineskip}
    \noindent\textit{Email address}:
    \texttt{mhuynh@kias.re.kr}
}

\end{document}

%% file: results/intro.tex
\section{Introduction}
Let $S$ be a smooth, affine, noetherian $\CC$-scheme and let $X/S$ be a smooth, quasi-projective family of Calabi--Yau $d$-folds.
The purpose of this note is to prove that there exists a relative $(2-d)$-shifted symplectic structure on the derived moduli stack $\M_{X/S}$ of flat families of properly supported coherent sheaves on $X/S$.

When $X/S$ is \emph{projective}, the seminal work of Pantev--To\"en--Vacqui\'e--Vezzosi \cite{ptvv} explained how to construct a shifted symplectic form via the following procedure. 
First, recognize the moduli stack of perfect complexes as a mapping stack $\maps(X,\rm{perf}_S)$, then pull back a natural $2$-shifted symplectic form from $\rm{perf}_S$ to $X \times_S \maps(X,\rm{perf}_S)$, and finally integrate the pulled-back form in the $X$-direction.
Later on, Brav--Dyckerhoff \cites{brav_dyckerhoff_1, brav_dyckerhoff_2} explained how to construct a shifted symplectic form on $\M_{X/S}$ when $S = \Spec(\CC)$ and $X$ is quasi-projective.
Really, this note shows that some results of Brav--Dyckerhoff work for more general base schemes $S$.

There are two kinds of families of Calabi--Yau \emph{threefolds} that are particularly interesting to us and for which we intend to apply our main result in later works.
We obtain the first kind of family by fixing a smooth subvariety $Z$ inside a smooth projective CY3 $Y$, and then taking $X/\AA^1$ to be the (classical) deformation to the normal cone.
We obtain the second kind of family by fixing a smooth complex projective curve $C$ and an $\AA^1$-family of rank two vector bundles $V_t$ on $C$ satisfying $\det(V_t) \cong \Omega^1_C$, and then taking $X/\AA^1$ to be the total spaces of these bundles.

We consider our main result applied to the first kind of example to be a derived and higher-dimensional generalization of a construction of Donagi--Ein--Lazarsfeld \cite{donagi_ein_lazarsfeld},
who degenerated the moduli space of sheaves on a K3 surface to the moduli space of Higgs bundles on a curve appearing in a linear system on the K3.
Their construction was later generalized by replacing the K3 surface with other symplectic or Poisson surfaces, see e.g. \cites{franco_degenerations, zhao2025moduli}.
This symplectic degeneration of moduli spaces was used by de Cataldo--Maulik--Shen \cite{decataldo_maulik_shen} to prove the $P=W$ Conjecture for genus $2$ curves.
Roughly speaking, these authors deformed the moduli space of Higgs bundles on such a curve to a moduli space of sheaves on its Jacobian, which allowed them to access tools from hyperk\"ahler geometry and compare the perversity of tautological cohomology classes on the two moduli spaces. 

We consider families of local curves interesting because of their potential application to proving a certain correspondence between Gopakumar--Vafa invariants and Gromov--Witten invariants. 
Indeed, Pardon's proof \cite{pardon_mnop} of the MNOP Conjecture relating GW, DT, and PT invariants highlights the importance of local curves when comparing enumerative invariants.

The mathematical definition of Gopakumar--Vafa invariants proposed by Maulik--Toda \cite{maulik_toda} goes roughly as follows.
Fix a smooth quasi-projective Calabi--Yau threefold $X$ and a curve class $\beta$, and consider the derived moduli stack of sheaves on $X$ whose support has homology class $\beta$.
The moduli stack of sheaves has a $(-1)$-shifted symplectic structure, so it can be covered by charts that are critical loci of locally-defined functions \cite{bbbj_darboux_stack}.
One may glue together the sheaves of vanishing cycles to obtain the so-called DT sheaf $\varphi$ on the moduli stack of sheaves on $X$ \cite{bbdj_joyce_sheaf}.
(For the purposes of \cite{maulik_toda}, they require an additional piece of structure called \emph{Calabi--Yau orientation data}, but we do not dwell on this here).
Finally, one uses the Hilbert--Chow morphism that sends a sheaf to the class of its Fitting support to endow the cohomology of $\varphi$ with the perverse-Leray filtration.
Taking the Euler characteristic of the graded pieces of this filtration determines the Gopakumar--Vafa invariants of $X$ and $\beta$.

In order to compare GV invariants of different local curves, we would like to generalize the structures and data in the paragraph above by working over a base $S$.
This note records the fact that the moduli stack of sheaves on a family of local curves $X/S$ admits a relative $(-1)$-shifted symplectic structure, which is a first step towards our goal.
We plan to compare critical charts for families of local curves in a later work.

\subsection{Brief outline}
The first three sections are preliminary.
In \S \ref{sect:linear_categories}, we review relevant notions about $\V$-module categories like finiteness conditions on them, their negative cyclic and Hochschild homology, and (left) Calabi--Yau structures.
In \S \ref{sect:moduli_of_objects}, we recall the definition of the moduli stack of ``perfect'' objects in an $A$-linear $\infty$-category $\C$ and the Representability Theorem of \cite{toen_vacquie_moduli_of_objects}.
In \S \ref{sect:forms}, we explain, in the language of stable $\infty$-categories, the main construction of \cite{brav_dyckerhoff_2}, which produces a shifted symplectic form on the moduli of objects in $\C$ from the data of a left Calabi--Yau structure on $\C$.

In \S \ref{sect:sheaves}, we apply this discussion to $\C = \rm{IndCoh}(X)$ considered as a $\rm{QCoh}(S)$-module.
The main points are to verify that $\C$ is a finite type $\rm{QCoh}(S)$-module and to identify the moduli stack of flat families of properly supported coherent sheaves as a substack of the moduli of objects in $\C$.

\subsection{Notation and conventions}
We work over the field of complex numbers $\CC$ throughout this article.

By default, all notions are $\infty$-categorical and derived.
\begin{itemize}
    \item We write $\rm{Ani}$ for the $\infty$-category of anima.
    \item We write $\rm{Pr}^L$ for the $\infty$-category of presentable $\infty$-categories and colimit-preserving functors between them. 
    We write $\rm{Pr}^L_{\omega}$ for the $\infty$-category of compactly generated presentable $\infty$-categories and colimit-preserving functors that also preserve compact objects.
    \item If $\V \in \rm{CAlg}(\rm{Pr}^L)$, then we define
    \[  
        \rm{Pr}^L_{\V} := \rm{Mod}_{\V}(\rm{Pr}^L_{\omega}),\quad \rm{Pr}^L_{\omega,\V} := \rm{Mod}_{\V}(\rm{Pr}^L_{\omega}).
    \]
    \item We interchangeably use $\Map_{\C}(\mhyphen,\mhyphen)$ and $\C(\mhyphen,\mhyphen)$ for mapping spaces and use $\maps(\mhyphen,\mhyphen)$ for mapping prestacks. 
    If $\C$ is a $\V$-module, then we write 
    \[
        \Hom_{\C}^{\V}(\mhyphen,\mhyphen): \C^{\op}\times\C \to \V,
    \]
    for the enriched hom.
    We use $\hom_{\C}(\mhyphen,\mhyphen)$ for internal homs.
    \item If $A$ is a connective $\EE_{\infty}$-$\CC$-algebra and $\V = \rm{Mod}_A(\rm{Sp})$, then we write $\rm{Pr}^L_{A}$ (resp. $\rm{Pr}^L_{\omega,A}$) instead of $\rm{Pr}^L_{\V}$ (resp. $\rm{Pr}^L_{\omega,\V}$).
    We refer to $\rm{Mod}_A$-module categories as $A$-linear categories and $\rm{Mod}_A$-linear functors as $A$-linear functors.
    \item We write $\rm{SchAff}$ for the $\infty$-category of affine schemes over $\Spec(\CC)$, i.e. the opposite category of the $\infty$-category of connective $\EE_{\infty}$-$\CC$-algebras.
    We add the decoration ``$<\infty$'' to denote eventually coconnective affine schemes, i.e. ones for which $\pi_iA = 0$ for all $i >> 0$, and we add the decoration ``$\rm{ft}$'' to denote finite-type affine schemes over $\Spec(\CC)$.
\end{itemize}

%% file: results/linear_categories.tex
\section{Module categories}\label{sect:linear_categories}
Let $\V$ be a presentable, symmetric monoidal $\infty$-category, i.e. an object of $\rm{CAlg}(\rm{Pr}^L)$.
First, we recall various finiteness conditions on $\V$-modules.
Then, we review the definition of negative cyclic and Hochschild homology of $\V$-modules.
Lastly, we recall the notion of left $d$-Calabi--Yau structures on $\V$-modules.
Oftentimes throughout this section, we specialize to the case when $\V = \rm{Mod}_A$, where $A$ is a connective $\EE_{\infty}$-$\CC$-algebra.

\subsection{Finiteness conditions}
The following facts may be useful to keep in mind.
Any $\V$-module $\C \in \rm{Pr}^L_{\V}$ is enriched over $\V$ by \cite{heine_enriched}*{Theorem 1.2}.
Furthermore, if $\V = \rm{Mod}_A$, then any $\V$-module $\C$ is stable \cite{lurie_SAG}*{\S D.1.5}.

We say that $\C$ is a dualizable $\V$-module if it is dualizable as an object of $\rm{Pr}^L_{\V}$ in the sense of \cite{lurie_HA}*{\S 4.6.1}.
Now suppose that $\C$ is a dualizable $A$-linear category.
Then the following compositions are equivalences of $A$-linear categories \cite{christ_shobers}*{Lemma 2.12}:
\begin{multline}
    \Theta_{\C}:\\
    \xymatrixcolsep{1.8cm}\xymatrix{
        \hom(\C^{\vee}\otimes \C,\rm{Mod}_A) \ar[r]^-{\id\otimes (\mhyphen)} & \hom(\C\otimes \C^{\vee}\otimes \C,\C) \ar[r]^-{(\mhyphen)\circ(\rm{coev}\otimes\id)} & \hom(\C,\C),
    }
\end{multline}
\begin{multline}
    \Xi_{\C}: \\
    \xymatrixcolsep{1.8cm}\xymatrix{
        \hom(\rm{Mod}_A,\C\otimes\C^{\vee})\ar[r]^-{(\mhyphen)\otimes\id} & \hom(\C,\C\otimes\C^{\vee}\otimes\C)\ar[r]^-{(\id\otimes\rm{ev})\circ(\mhyphen)} & \hom(\C,\C).
    }
\end{multline}
Their inverses are given by
\begin{multline}
    \Theta^{-1}_{\C}: \\
    \xymatrixcolsep{1.8cm}\xymatrix{
        \hom(\C,\C)\ar[r]^-{\id\otimes(\mhyphen)} &\hom(\C^{\vee}\otimes\C,\C^{\vee}\otimes\C)\ar[r]^-{\rm{ev}\circ(\mhyphen)} & \hom(\C^{\vee}\otimes\C,\rm{Mod}_A),
    }
\end{multline}
respectively
\begin{multline}
    \Xi^{-1}_{\C}: \\
    \xymatrixcolsep{1.8cm}\xymatrix{
        \hom(\C,\C)\ar[r]^-{(\mhyphen)\otimes\id} & \hom(\C\otimes\C^{\vee},\C\otimes\C^{\vee})\ar[r]^-{(\mhyphen)\circ\rm{coev}}&\hom(\rm{Mod}_A,\C\otimes\C^{\vee}).
    }
\end{multline}

Write $\tau: \C\otimes\C^{\vee}\isomto \C^{\vee}\otimes \C$ for the $A$-linear involution that swaps the two factors.
\begin{defn}[=\cite{christ_shobers}*{Definition 2.14, Proposition 2.15}]
\label{defn:left_right_duals_of_endofunctors}
    Let $F \in \hom(\C,\C)$ be an $A$-linear endofunctor of a dualizable $A$-linear category $\C$.
    \begin{enumerate}
        \item Say that $F$ is \emph{left dualizable} if $\Xi_{\C}^{-1}(F)$ admits an $A$-linear right adjoint.
        Then define the \emph{left dual of $F$} by
        \[
            F^! := \Theta_{\C}(\tau\circ\Xi_{\C}^{-1}(F)^R) \in \hom(\C,\C).
        \]
        \item Say that $F$ is \emph{right dualizable} if $\Theta_{\C}^{-1}(F)$ admits an $A$-linear right adjoint.
        Then define the \emph{right dual of $F$} by
        \[
            F^{\ast} := \Xi_{\C}(\tau\circ\Theta_{\C}^{-1}(F)^R) \in \hom(\C,\C).
        \]
    \end{enumerate}
\end{defn}

Finally, we may define finiteness conditions for dualizable $A$-linear categories.
\begin{defn}[=\cite{christ_shobers}*{Definition 2.19}]
    \label{defn:smooth_proper}
    Let $\C$ be a dualizable $A$-linear category.
    \begin{enumerate}
        \item Say that $\C$ is \emph{smooth} if $\id_{\C}$ is left dualizable.
        \item Say that $\C$ is \emph{proper} if $\id_{\C}$ is right dualizable.
    \end{enumerate}
\end{defn}

\begin{rmk}~
\begin{enumerate}
    \item 
    The $\infty$-category $\rm{Mod}_A$ is rigid over the $\infty$-category of spectra $\rm{Sp}$.
    Rigidity implies that the right adjoint of a $\rm{Mod}_A$-linear functor is itself $\rm{Mod}_A$-linear \cite{GR1}*{Ch.~1, Lemma 9.3.6}, so we could have defined left/right dualizable endofunctors without the adjective $A$-linear.
    \item 
    The functor $\Xi_{\C}^{-1}(\id_{\C})$ is nothing but $\rm{coev}_{\C/A}$, and the functor $\Theta_{\C}^{-1}(\id_{\C})$ is nothing but $\rm{ev}_{\C/A}$.
\end{enumerate}
\end{rmk}

There is one more finiteness condition that we wish to consider.
\begin{defn}
    \label{defn:finite_type}
    Let $\C$ be a compactly generated $\V$-module.
    Say that $\C$ is \emph{finite type} if it is a compact object of $\rm{Pr}^L_{\omega,\V}$.
\end{defn}

\begin{rmk}
    Suppose that $\C \in \rm{Pr}^L_{\omega,A}$ is a compactly generated $A$-linear category and that $f: A \to B$ is a morphism of connective $\EE_{\infty}$-$\CC$-algebras.
    If $\C$ is finite type, i.e. compact in $\rm{Pr}^L_{\omega,A}$, then its pullback $\C\otimes_A B$ is finite type, since the pullback functor $\rm{Pr}^L_{\omega,A} \to \rm{Pr}^L_{\omega,B}$ preserves compact objects. 
\end{rmk}

There are various implications between the finiteness conditions that we have introduced so far.
In the remainder of this subsection, we only record the implications that are sufficient for our purposes.
The reader interested in the other implications may consult \cite{toen_vacquie_moduli_of_objects}.
\begin{prop}
\label{prop:dualizable_implies_compact}
    Let $\V \in \rm{CAlg}(\rm{Pr}^L_{\omega})$ be a compactly-generated presentable symmetric monoidal category.
    Then the following statements hold:
    \begin{enumerate}
        \item The category $\rm{Pr}^L_{\omega,\V}$ is compactly generated, i.e. $\aleph_0$-presentable, and symmetric monoidal.
        \item The symmetric monoidal structure on $\rm{Pr}^L_{\omega,\V}$ preserves small colimits separately in each variable.
        \item The symmetric monoidal unit of $\rm{Pr}^L_{\omega,\V}$ is compact.
        \item Dualizable objects of $\rm{Pr}^L_{\omega,\V}$ are compact.
    \end{enumerate}
    \begin{proof}
        For (1) and (2), note that $\rm{Pr}^L_{\omega}$ is $\aleph_0$-presentable \cite{scholze_gestalten}*{Theorem 2.9} and symmetric monoidal \cite{scholze_gestalten}*{Definition/Proposition 3.6}.
        Therefore by \cite{lurie_HA}*{\S 4.2, \S 4.5}, we know that $\rm{Pr}^L_{\omega,\V}$ is $\aleph_0$-presentable and symmetric monoidal.
        Furthermore, the symmetric monoidal structure on $\rm{Pr}^L_{\omega}$ preserves small colimits separately in each variable, so the same is true for the symmetric monoidal structure on $\rm{Pr}^{L}_{\omega,\V}$ \cite{lurie_HA}*{4.8.5.21}.

        Now we prove item (3).
        Observe that \cite{ramzi_rigidity}*{Example 4.38} implies that the free functor $\rm{Pr}^L_{\omega} \to \rm{Pr}^L_{\omega,\V}$ endows $\rm{Pr}^L_{\omega,\V}$ with the structure of a rigid $\rm{Pr}^L_{\omega}$-algebra.
        In particular, the unit $\mathbf{1}_{\rm{Pr}^L_{\omega,\V}} = \V$ is $\rm{Pr}^L_{\omega}$-atomic, so it suffices to show that the symmetric monoidal unit of $\rm{Pr}^L_{\omega}$ is compact.
        This is done in Lemma \ref{lem:ani_compact}.

        Finally for (4), suppose that $\C \in \rm{Pr}^L_{\omega,\V}$ is dualizable.
        It is a general fact that for any symmetric monoidal $\infty$-category with colimit-preserving tensor product and compact unit, the dualizable objects are compact.
        (Indeed, the functor of taking maps out of a dualizable object is the composite of tensoring with the dual and then taking maps out of the unit).
        So we conclude (4) by applying items (2) and (3).
    \end{proof}
\end{prop}

\begin{lem}
\label{lem:ani_compact}
    The symmetric monoidal unit $\rm{Ani}$ of $\rm{Pr}^L_{\omega}$ is compact.
    \begin{proof}
        Let $\D$ be a compactly-generated presentable category.
        Recall that morphisms in $\rm{Pr}^L_{\omega}$ are functors that preserve small colimits and compact objects. 

        We claim that evaluation at the point determines an equivalence of $\infty$-categories
        \[
            \rm{Map}_{\rm{Pr}^L_{\omega}}(\rm{Ani},\D) \simeq \D^c,
        \]
        where the category on the right-hand side is the $\infty$-category of compact objects of $\D$.
        To see this, recall that $\rm{Ani}$ is the animation of the category of finite sets, i.e. $\rm{Ani}$ is equivalent to the $\infty$-category of finite-product-preserving functors $\rm{Fin}^{\op} \to \rm{Ani}$.
        Therefore, the $\infty$-category of colimit-preserving functors from $\rm{Ani}$ to $\D$ is equivalent, via restriction along the Yoneda embedding, to the $\infty$-category of functors from $\rm{Fin}$ to $\D$ that preserve finite coproducts.
        Since every finite set is equivalent to a set obtained as the finite coproduct of copies of the point, a functor $F \in \Map_{\rm{Pr}^L}(\rm{Ani},\D)$ is determined by $d := F(\rm{pt})$.

        Clearly, if $F \in \Map_{\rm{Pr}^L_{\omega}}(\rm{Ani},\D)$, then $d := F(\rm{pt})$ must be compact.
        On the other hand, suppose that $F$ preserves colimits and that $d = F(\rm{pt})$ is compact.
        Then for every finite set $K \in \rm{Ani}$, $F(K)$ is compact, since (1) $F$ commutes with colimits, (2) $K$ is a finite colimit of compact objects in $\rm{Ani}$, and (3) the compact objects in $\D$ are closed under taking colimits.
        Recall that an anima $X$ is compact if and only if it is the retract of a finite set $K$ \cite{kerodon}*{\href{https://kerodon.net/tag/067X}{TAG 067X}}. 
        Thus if $X$ is compact, then $F(X)$ is compact, since $F(X)$ is the retract of $F(K)$, and compact objects are stable under retracts \cite{kerodon}*{\href{https://kerodon.net/tag/064Z}{TAG 064Z}}.

        Finally, we recall that there is an equivalence of categories
        \[
            \theta: \rm{Pr}^L_{\omega} \isomto \rm{Rex}^{\aleph_0}_{\rm{ic}},\quad \D \mapsto \D^c,
        \]
        given by sending a compactly-generated presentable category to its subcategory of compact objects \cite{lurie_HTT}*{Propositions 5.5.7.8 and 5.5.7.10}.
        Thus we have
        \[
            \Map_{\rm{Pr}^L_{\omega}}(\rm{Ani},\rm{colim}_i\D_i) \simeq
            \theta(\rm{colim}_i\D_i) \simeq \rm{colim}_i\theta(\D_i) \simeq \Map_{\rm{Pr}^L_{\omega}}(\rm{Ani},\D_i),
        \]
        and conclude that $\rm{Ani}$ is compact in $\rm{Pr}^L_{\omega}$.
    \end{proof}
\end{lem}

\begin{prop}
    \label{prop:compact_generator}
    Let $\V = \rm{Mod}_A$, and let $\C$ be an $A$-linear category.
    Then the following statements hold.
    \begin{enumerate}
        \item If $\C$ is smooth, then $\C$ is equivalent to $\rm{RMod}_B$ for some smooth $\EE_1$-algebra $B$ over $A$.
        In particular, $\C$ has a compact generator.
        \item Suppose that $\C$ is equivalent to $\rm{RMod}_B$ for some $\EE_1$-algebra $B$ over $A$, and that $\C$ is finite type.
        Then $B$ is a compact object of $\rm{Alg}_{\EE_1}(\rm{Mod}_A)$.
    \end{enumerate}
    \begin{proof}
        The first statement is \cite{lurie_SAG}*{Proposition 11.3.2.4}.

        The second statement is proved in  \cite{toen_vacquie_moduli_of_objects}*{Lemma 2.11} in the language of model categories and dg categories.
        We translate this proof for completeness.
        There is an adjunction, with fully faithful left adjoint, between algebras over $A$ and pointed compactly-generated $A$-linear categories \cite{lurie_HA}*{4.8.5.12 et seq}:
        \begin{equation*}
            \rm{Alg}_A  \rightleftharpoons (\rm{Pr}^L_{\omega,A})_{\ast},\quad
            B \mapsto (\rm{RMod}_B,B),\quad
            \hom_{\D}(d,d) \leftmapsto (\D,d).
        \end{equation*}
        For every $(\D,d) \in (\rm{Pr}^L_{\omega,A})_{\ast}$, we have a fiber sequence
        \[
            \xymatrixcolsep{0.5cm}\xymatrix{
                \Map_{\rm{Alg}_A}(B,\hom_{\D}(d,d)) \ar[r] & \Map_{\rm{Pr}^L_{\omega,A}}(\rm{RMod}_B,\D) \ar[r] & \Map_{\rm{Pr}^L_{\omega,A}}(\rm{Mod}_A,\D),
            }
        \]
        obtained via the adjunction.
        
        Finally, suppose that $D = \rm{colim}_iD_i$ is a filtered colimit of $A$-algebras.
        Then the following diagram commutes, and the columns are fiber sequences:
        \[
            \xymatrix{
                \rm{colim}_i\Map_{\rm{Alg}_A}(B,D_i) \ar[d] \ar[r] & \Map_{\rm{Alg}_A}(B,D) \ar[d] \\
                \rm{colim}_i\Map_{\rm{Pr}^L_{\omega,A}}(\rm{RMod}_B,\rm{RMod}_{D_i}) \ar[d] \ar[r]^-{\sim} & \Map_{\rm{Pr}^L_{\omega,A}}(\rm{RMod}_B,\rm{RMod}_D) \ar[d] \\
                \rm{colim}_i\Map_{\rm{Pr}^L_{\omega,A}}(\rm{Mod}_A,\rm{RMod}_{D_i}) \ar[r]^-{\sim} & \Map_{\rm{Pr}^L_{\omega,A}}(\rm{Mod}_A,\rm{RMod}_D),
            }
        \]
        where the equivalences hold because $\C$ and $\rm{Mod}_A$ are compact objects of $\rm{Pr}^L_{\omega,A}$.
        Thus we conclude that $B$ is a compact object of $\rm{Alg}_A$.
    \end{proof}
\end{prop}

\subsection{Hochschild homology and negative cyclic homology}\hfill \\
We recall Hochschild homology and negative cyclic homology following \cites{hoyois_hochschild, hss}.
Throughout this section, let $\V \in \rm{CAlg}(\rm{Pr}^L)$ and let $\C$ be a $\V$-module.

\subsubsection{Definitions}
Then we may define a cyclic object $\C^{\natural} \in \rm{PSh}(\Lambda,\V)$ by
\[
    \C^{\natural}_n := \rm{colim}_{a_0,\ldots,a_n\in \C} \bigotimes_{i = 0}^n \C(a_i,a_{i+1}).
\]
Let $\tilde{\Lambda}$ be a groupoid completion of $\Lambda$, and let $\TT := \tilde{\Lambda}([0],[0])$ be the automorphism group of $[0]$.
We have an equivalence $B\TT \simeq \tilde{\Lambda}$.

The pullback functor
\[
    \rm{PSh}(\tilde{\Lambda},\V) \to \rm{PSh}(\Lambda,\V)
\]
has a left adjoint, which we denote by $\abs{\mhyphen}$.
Furthermore, the morphisms
\[
    \xymatrix{
        \rm{pt} \ar[r]^-{i} & B\TT \ar[r]^-{p} & \rm{pt}
    }
\]
define functors of presheaves: the functor $p_{\ast}(\mhyphen)$ corresponds to taking $\TT$-fixed points, and the composite $i^{\ast}\abs{\mhyphen}$ corresponds to taking the geometric realization of the underlying simplicial object \cite{hoyois_hochschild}*{Proposition 1.1}.

\begin{defn}
    \label{defn:hochschild_negative_cyclic}
    Let $\C \in \rm{Mod}_{\V}(\rm{Pr}^L)$.
    \begin{enumerate}
        \item The \emph{negative cyclic homology} of $\C$ is
        \[
            \rm{HN}(\C) := p_{\ast}\abs{\C^{\natural}} \in \V.
        \]
        \item The \emph{Hocschild homology} of $\C$ is
        \[
            \rm{HH}(\C) := i^{\ast}\abs{\C^{\natural}}.
        \]
    \end{enumerate}
\end{defn}
Since $p\circ i = \id$, the counit of the adjunction $p^{\ast}\dashv p_{\ast}$ induces a morphism
\[
    \rm{HN}(\C) \to \rm{HH}(\C).
\]

\subsubsection{Base change}
Now we specialize to the case when $\V = \rm{Mod}_A$ for a connective $\EE_{\infty}$-$\CC$-algebra $A$.
For the subsequent discussion, we use some rudiments of the theory of (affine) Stefanich rings, for which the reader may consult \cite{scholze_gestalten}.

We inductively define
\[
    0\rm{Pr}_{A} := \rm{Mod}_A,\quad (n+1)\rm{Pr}_{A} := \rm{Mod}_{n\rm{Pr}_A}(\rm{Pr}^L_{\omega}).
\]
For each $n \geq 0$, we have a symmetric monoidal functor
\[
    \rm{CAlg}_{\CC}^{\rm{cn}} \to \rm{CAlg}(\rm{Pr}^L_{\omega}),
\]
which sends morphisms $f: A \to B$ to pullbacks $f_n^{\ast}: n\rm{Pr}_A \to n\rm{Pr}_B$.
The Adjoint Functor Theorem \cite{lurie_HTT}*{5.5.2.9} implies that the functor $f_n^{\ast}$ admits a right adjoint $f_{n,\ast}$, and in this setting, the right adjoint is colimit-preserving and satisfies base change.
Furthermore, for $n \geq 1$ the functor $f_n^{\ast}$ has a left adjoint $f_{n,\sharp}$ that coincides with $f_{n,\ast}$.

We use this machinery to prove the following
\begin{prop}
    \label{prop:base_change_hochschild}
    Suppose that $\C \in \rm{Pr}^L_{\omega,A}$ is a compactly generated $A$-linear category, and suppose that $f: A \to B$ is a morphism of connective $\EE_{\infty}$-$\CC$-algebras.
    Then we have an equivalence of $B$-modules
    \[
        \rm{HH}(\C\otimes B) \simeq \rm{HH}(\C) \otimes B.
    \]
    \begin{proof}
        First we claim that there is an equivalence
        \[
            f_0\circ \C^{\natural} \simeq (f_1^{\ast}\C)^{\natural}
        \]
        of presheaves on $\Lambda$ valued in $\rm{Mod}_A$.
        Indeed, for each $[n] \in \Lambda$, we have an equality of colimits 
        \[
            \rm{colim}\bigotimes (f_1^{\ast}\C)(f_0^{\ast}a_i,f_0^{\ast}a_{i+1}) \simeq \rm{colim} \bigotimes (f_1^{\ast}\C)(b_i,b_{i+1}).
        \]
        This is because the functor 
        \[
            F := f_1^{\ast}\circ(\mhyphen): \rm{Fun}(\sqcup_{i=0}^n \Delta^0,\C) \to \rm{Fun}(\sqcup_{i=0}^n \Delta^0,f_1^{\ast}\C)
        \]
        between the two indexing categories is a right adjoint, 
        and thus right cofinal.
        These equivalences also give equivalences of morphisms, which are defined using the universal property of the colimit.
        At this point, we may conclude that
        \[
            \rm{HH}(f_1^{\ast}\C) := i^{\ast}\abs{(f_1^{\ast}\C)^{\natural}} \simeq i^{\ast}\abs{f_0^{\ast}(\C^{\natural})}.
        \]
        To finish the proof, we note that $f_0^{\ast}$ commutes with $\abs{\mhyphen}$ and with $i^{\ast}$, so we conclude that
        \[
            i^{\ast}\abs{f_0^{\ast}(\C^{\natural})} \simeq f_0^{\ast}i^{\ast}\abs{\C^{\natural}} =: f_0^{\ast}\rm{HH}(\C).
        \]
    \end{proof}
\end{prop}

\begin{rmk}
\label{rmk:base_change_negative_cyclic}
    Suppose that $\C \in \rm{Pr}^L_{\omega,A}$ is a compactly generated $A$-linear category, and suppose that $f: A \to B$ is a morphism of connective $\EE_{\infty}$-$\CC$-algebras.
    Then we have an equivalence
    \[
        \rm{HN}(f_1^{\ast}\C) := p_{\ast}\abs{(f_1^{\ast}\C)^{\natural}} \simeq p_{\ast}\abs{f_0^{\ast}(\C^{\natural})} \simeq p_{\ast}f_0^{\ast}\abs{\C^{\natural}}.
    \]
    Furthermore, the following diagram commutes:
    \[
        \xymatrix{
            \rm{Fun}(\rm{pt},\rm{Mod}_A) \ar[r]^-{p^{\ast}} \ar[d]_-{f_0^{\ast}\circ(\mhyphen)} & \rm{Fun}(B\TT,\rm{Mod}_A) \ar[d]^-{f_0^{\ast}\circ(\mhyphen)} \\
            \rm{Fun}(\rm{pt},\rm{Mod}_B) \ar[r]_-{p^{\ast}} & \rm{Fun}(\rm{pt},\rm{Mod}_B),
        }
    \]
    and $p^{\ast}$ has a right adjoint $p_{\ast}$, so by abstract nonsense we obtain a composite morphism
    \[
        \rm{HN}(\C)\otimes_A B \to \rm{HN}(\C\otimes_A B).
    \]
\end{rmk}

\subsubsection{Trace and other descriptions}
Suppose that $\C$ is a dualizable $A$-linear category.
Then we have an equivalence of $A$-modules
\begin{equation}
\label{eq:hochschild_ITO_trace}
    \rm{HH}(\C) \simeq \rm{ev}_{\C/A} \circ \tau \circ \rm{coev}_{\C/A}(\mathbf{1}_{\rm{Mod}_A}),
\end{equation}
see \cite{hss}*{\S 4.5}.
Suppose further that $\C$ is smooth in $\rm{Pr}^L_{\omega,A}$.
Then we may describe its Hochschild homology in terms of its left Serre functor \cite{christ_shobers}*{Lemma 2.29}:
\begin{equation}
\label{eq:hochschild_ITO_hom}
    \rm{HH}(\C) \simeq \Hom^{\rm{Mod}_A}_{\rm{end}(\C)}(\id^!_{\C},\id_{\C}).
\end{equation}

\subsection{Left Calabi--Yau structures}
We recall the definition of a left Calabi--Yau structure.
\begin{defn}[=\cite{brav_dyckerhoff_1}*{Definition 3.5}, \cite{christ_shobers}*{Definition 3.1}]
Let $\C \in \rm{Pr}^L_{\omega,A}$ be a smooth $A$-linear category.
A \emph{left $d$-Calabi--Yau structure on $\C$} is a negative cyclic homology class
\[
    \eta: A[d] \to \rm{HN}(\C),
\]
such that the induced Hochschild homology class (cf.~\ref{eq:hochschild_ITO_hom}) defines an equivalence
\[
    \id^!_{\C} \isomto \id_{\C}[-d].
\]
\end{defn}

\begin{rmk}
    Suppose that $\C \in \rm{Pr}^L_{\omega,A}$ has a left $d$-Calabi--Yau structure, and suppose that $f: A \to B$ is a finitely presented morphism of connective $\EE_{\infty}$-$\CC$-algebras.
    Then via pullback $f_0^{\ast}: \rm{Mod}_A \to \rm{Mod}_B$, we obtain a left $d$-Calabi--Yau structure on $\C\otimes_A B$, (see Proposition \ref{prop:base_change_hochschild} and Remark \ref{rmk:base_change_negative_cyclic}).
\end{rmk}

%% file: results/moduli_of_objects.tex
\section{Moduli of objects}\label{sect:moduli_of_objects}
In this section we review the definition of the moduli stack of (pseudo-perfect) objects and the Representability Theorem of To\"en--Vacqui\'e \cite{toen_vacquie_moduli_of_objects}.

Fix a connective $\EE_{\infty}$-$\CC$-algebra $A$ and a compactly generated $A$-linear category $\C$.
Recall that for any $\D \in \rm{Pr}^L_{A}$, we may identify the tensor product with a category of functors,
\[
    \C\otimes_A \D \simeq \rm{Fun}^{\rm{ex}}_A(\C^{c,\op},\D),
\]
where the right-hand side consists of $A$-linear functors that are exact. 

\begin{defn}
\label{defn:proper_support_pseudo_perfect}
    Suppose that $T$ is a scheme over $S = \Spec(A)$, and suppose we are given an object $F \in \C\otimes_A \rm{QCoh}(T)$.
    Write $\Phi_F$ for the corresponding $A$-linear exact functor from $\C^{c,\op}$ to $\rm{QCoh}(T)$.
    \begin{enumerate}
        \item We say that $F$ is \emph{properly supported over $T$} if the functor $\Phi_F$ factors through $\rm{Coh}(T)$.
        \item We say that $F$ is \emph{pseudo-perfect} if the functor $\Phi_F$ factors through $\rm{Perf}(T)$.
    \end{enumerate}
    Write $\C_T^{\rm{ps}}$ (resp.~$\C_T^{\rm{pp}}$) for the full subcategory of $\C_T$ consisting of properly supported (resp.~pseudo-perfect) objects.
\end{defn}

If $\C$ is smooth, then pseudo-perfect objects satisfy a more familiar finiteness condition.
\begin{prop}[\cite{toen_vacquie_moduli_of_objects}*{Lemma 2.8}]
\label{prop:smooth_pp_implies_compact}
    Suppose that $T = \Spec(B)$ is an affine scheme over $S = \Spec(A)$.
    If $\C$ is smooth, then any pseudo-perfect object $F \in \C_B$ is compact.
\end{prop}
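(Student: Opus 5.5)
We first reduce to the universal case. Since $\C$ is smooth over $A$, its base change $\C_B = \C\otimes_A \rm{Mod}_B$ is smooth over $B$: the coevaluation of $\C_B$ is the base change of $\rm{coev}_{\C/A}$, and the $A$-linear right adjoint of $\rm{coev}_{\C/A}$ base changes to a $B$-linear right adjoint. By Proposition \ref{prop:compact_generator}(1) we may then write $\C_B \simeq \rm{RMod}_R$ for a smooth $\EE_1$-$B$-algebra $R$, with compact generator $R$. Under the identification $\C_B \simeq \rm{Fun}^{\rm{ex}}_B(\C_B^{c,\op},\rm{Mod}_B)$, an object $F$ corresponds to $\Phi_F = \Hom^{\rm{Mod}_B}_{\C_B}(\mhyphen,F)$. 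Pseudo-perfectness of $F$ (the condition is compatible with this base change, since $\C^c$ generates $\C_B^c$ under finite colimits and retracts) says that the underlying $B$-module of $F$, namely $\Hom(R,F)$, is perfect over $B$.

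The key step is to write $F$ using the left dual of the identity. Smoothness means that $R$ is perfect as an $R\otimes_B R^{\op}$-module, i.e.\ the diagonal bimodule $R$ is a compact object of $\rm{Bimod}_R \simeq \C_B\otimes_B\C_B^{\vee}$. For every $F\in\rm{RMod}_R$ we have
\[
    F \simeq F\otimes_R R,
\]
and the functor $\rm{Bimod}_R \to \rm{RMod}_R$, $M \mapsto F\otimes_R M$, is exact and colimit-preserving. It therefore sends the thick subcategory generated by $R\otimes_B R^{\op}$ to the thick subcategory generated by $F\otimes_R (R\otimes_B R^{\op}) \simeq F\otimes_B R$. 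Because $R$ is a retract of a finite colimit of shifts of $R\otimes_B R^{\op}$, it follows that $F$ is a retract of a finite colimit of shifts of $F\otimes_B R$, where $F$ is regarded here as a $B$-module.

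Finally, $F\otimes_B R$ is the image of the $B$-module $F$ under the free functor $\rm{Mod}_B\to\rm{RMod}_R$, $M\mapsto M\otimes_B R$. This functor preserves compact objects, since it is left adjoint to a colimit-preserving forgetful functor. As $F$ is perfect over $B$, the object $F\otimes_B R$ is compact in $\C_B$, and hence so is $F$, because compact objects are closed under finite colimits, shifts and retracts.

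We expect the main obstacle to be the bookkeeping in the middle step: checking that the bimodule tensor functor $F\otimes_R(\mhyphen)$ is well defined as an exact functor on all of $\rm{Bimod}_R$ with values in right $R$-modules, and that it sends the free bimodule to $F\otimes_B R$ with the correct module structure. A secondary point is to check that the definition of pseudo-perfect via $\Phi_F$ really reduces to perfectness of $\Hom(R,F)$ over $B$.
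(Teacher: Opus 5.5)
Your proposal is correct. It is essentially the argument of Lemma 2.8 in \cite{toen_vacquie_moduli_of_objects}, which the paper cites rather than reproves: smoothness puts the diagonal bimodule in the thick closure of free bimodules, and tensoring with $F$ puts $F$ in the thick closure of objects freely generated by perfect $B$-modules. Your preliminary step (showing $\C_B$ is smooth over $B$ and reducing pseudo-perfectness, defined via $\C^c$, to perfectness of $\Hom(R,F)$) is the right way to adapt their absolute statement to the relative setting used here.
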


We quote the following lemma for later use in the proof of Theorem \ref{thm:CY_str_to_sss}.
\begin{lem}[=\cite{christ_shobers}*{Lemma 2.25}]
\label{lem:christ_lemma}
    Let $\C \in \rm{Pr}^L_{\omega,A}$ be finite type, and let $B$ be a finitely presented commutative $A$-algebra.
    Then there is an equivalence of functors
    \[
        \Hom^{\rm{Mod}_B}_{\C_B}(\mhyphen_2,\mhyphen_1)^{\vee} \simeq \Hom^{\rm{Mod}_B}_{\C_B}(\id^!_{\C_B}(\mhyphen_1),\mhyphen_2): (\C_B^{\rm{pp}})^{\op}\times\C^c \to \rm{Mod}_B.
    \]
\end{lem}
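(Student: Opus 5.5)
The plan is to reduce both sides to a computation in bimodules over a single algebra and then use pseudo-perfectness to commute a $B$-linear dual past that computation. Throughout, $\mhyphen_2$ is read as the image in $\C_B$ of a compact object of $\C$ under $\C\to\C_B$, and $\mhyphen_1$ as a pseudo-perfect object of $\C_B$.

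\textbf{Step 1: reduction to $B$ and to a smooth algebra.} A finite type $A$-linear category is smooth by the implications recorded in \cite{toen_vacquie_moduli_of_objects}. By the remark following Definition \ref{defn:finite_type}, the base change $\C_B$ is again finite type, hence smooth over $B$. So I may replace $A$ by $B$ and $\C$ by $\C_B$. By Proposition \ref{prop:compact_generator}(1), write $\C_B\simeq \rm{RMod}_R$ for a smooth $\EE_1$-$B$-algebra $R$. Then $\C_B^{\vee}\simeq \rm{LMod}_R$ and $\C_B\otimes\C_B^{\vee}\simeq {}_R\rm{BMod}_R$. Under these identifications, $\rm{coev}(B)$ is the diagonal bimodule $R$ and $\rm{ev}$ is the relative tensor product $\otimes_R$.

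\textbf{Step 2: compute $\id^!$ and reduce to the generator.} Unwinding Definition \ref{defn:left_right_duals_of_endofunctors}, $\id^!_{\C_B}$ is the functor $M\mapsto M\otimes_R R^{!}$, where $R^! := \Hom_{R^e}(R,R^e)$ is the inverse dualizing bimodule. Smoothness, i.e. compactness of $R$ in ${}_R\rm{BMod}_R$, guarantees that the right adjoint of $\rm{coev}$ is computed by $\Hom_{R^e}(R,-)$. Both sides of the claimed equivalence are exact in each variable, and the right-hand side is contravariant in $\mhyphen_1$ and covariant in $\mhyphen_2$, as is the dual of the left-hand side. The compact objects of $\C$ are the thick closure of the generator, so it suffices to construct a natural map and check it is an equivalence for $\mhyphen_2 = R$. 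There the left side is $\Hom_R(R,M)^\vee = M^\vee$, with $M=\mhyphen_1$ viewed as a right $R$-module. Pseudo-perfectness says exactly that $M$ is perfect as a $B$-module, so $M^\vee = \Hom_B(M,B)$ is a well-behaved perfect $B$-module.

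\textbf{Step 3: the main computation.} I would compute
\[
\Hom_R(M\otimes_R R^!, R)\simeq \Hom_{R^e}(R, M^\vee\otimes_B R)\simeq \Hom_{R^e}(R,R^e)\otimes_{R^e}(M^\vee\otimes_B R).
\]
The first equivalence is the tensor--hom adjunction, using that $M$ is $B$-perfect. The second uses compactness of $R$ over $R^e$, which lets $\Hom_{R^e}(R,-)$ commute with the colimits building $M^\vee\otimes_B R$. The final expression should collapse to $M^\vee\simeq \Hom_R(R,M)^\vee$, and one then checks naturality in both variables.

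\textbf{Main obstacle.} The delicate step is the first equivalence in Step 3: moving the $B$-dual of $M$ across $\Hom_{R^e}(R,-)$. This is where both hypotheses are needed at once: smoothness, so that $R$ is compact over $R^e$, and pseudo-perfectness, so that $M$ is $B$-perfect and dualization is reflexive. My first attempt would be to check it for $M$ free of finite rank over $B$ and extend by thick-subcategory arguments inside $B$-perfect modules. The remaining bookkeeping is identifying $\tau$ and $\Theta$ with the bimodule description of $\id^!$.
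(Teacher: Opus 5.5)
Your proposal has a genuine error in Step 3, which is the actual content of the proof. The paper gives no argument of its own here: it quotes the statement from Christ's Lemma 2.25, the $\infty$-categorical form of Brav--Dyckerhoff's Lemma 3.4, applied to $\C_B$ over $B$. Your framework is the standard direct proof and is sound up to Step 3: finite type implies smooth, base change to $\C_B$, Morita reduction to $\mathrm{RMod}_R$, $\id^!_{\C_B}\simeq M\mapsto M\otimes_R R^!$, and reduction to the generator.

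\textbf{The error in Step 3.} The tensor--hom adjunction gives
\[
\Hom_R(M\otimes_R R^!,R)\simeq \Hom_{R^e}\bigl(R^!,\Hom_B(M,R)\bigr)\simeq \Hom_{R^e}\bigl(R^!,M^\vee\otimes_B R\bigr),
\]
with $R^!$, not $R$, in the first slot. Your middle term $\Hom_{R^e}(R,M^\vee\otimes_B R)$ is, by the same adjunction, $\Hom_R(M,R)$ rather than $\Hom_R(\id^!(M),R)$. Your final expression $R^!\otimes_{R^e}(M^\vee\otimes_B R)\simeq R^!\otimes_R M^\vee$ does not collapse to $M^\vee$ unless $R^!\simeq R$. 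A concrete check: take $R=B[x]$ and $M=B$ with $x$ acting by zero. Then $R^!\simeq R[-1]$, your chain outputs $B[-1]$, but both sides of the lemma equal $B$.

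\textbf{The missing ingredient.} Smoothness is really used through biduality for the perfect bimodule $R$, not through $\Hom_{R^e}(R,-)$ commuting with colimits. Since $R$ is compact in ${}_R\mathrm{BMod}_R$, so is $R^!$, and $\Hom_{R^e}(R^!,R^e)\simeq R$. Hence $\Hom_{R^e}(R^!,P)\simeq R\otimes_{R^e}P$ naturally in $P$. Then $R\otimes_{R^e}(M^\vee\otimes_B R)\simeq R\otimes_R M^\vee\simeq M^\vee$, as required. Your ``main obstacle'' is therefore misplaced: the step using $B$-perfectness, $\Hom_B(M,R)\simeq M^\vee\otimes_B R$, is routine.

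\textbf{The natural map.} You say a natural map is needed before reducing to the generator, but you never construct one. The cleanest fix is to run the corrected computation with an arbitrary compact $N$ in place of $R$. This gives $\Hom_R(\id^!(M),N)\simeq N\otimes_R M^\vee$, naturally in $M$ and $N$. Compare this with $\Hom_R(N,M)^\vee$ via the evident map $N\otimes_R M^\vee\to\Hom_R(N,M)^\vee$ induced by the pairing $M\otimes_R M^\vee\to B$. That map is an equivalence for $N=R$, hence on the thick closure, which contains the image of $\C^c$.
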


Now we may define the (laft) prestack of (pseudo-perfect) objects in $\C$.
\begin{defn}
    Let $A$ be a discrete, commutative $\CC$-algebra, and let $\C \in \rm{Pr}^L_{\omega,A}$ be a compactly generated $A$-linear category.
    Define the laft prestack of objects in $\C$ as the left Kan extension of the functor
    \[
        \tensor*[^{<\infty}]{\rm{SchAff}}{_{\rm{ft}/\Spec(A)}} \to \rm{Ani},\quad [T = \Spec(B)] \mapsto \C_B^{\rm{pp},\simeq}.
    \]
\end{defn}

\begin{thm}[=\cite{toen_vacquie_moduli_of_objects}*{Theorem 0.2}]
    Let $A$ be a discrete, commutative $\CC$-algebra, and let $\C \in \rm{Pr}^L_{\omega,A}$ be an $A$-linear category that is equivalent to $\rm{RMod}_B$ for some compact $A$-algebra $B \in \rm{Alg}_A$.
    Then the moduli stack of objects $\M_{\C/A}$ is a locally geometric derived stack locally of finite type.
    Moreover, the tangent complex at a point $x: \Spec(B) \to \M_{\C/A}$ corresponding to an object $F \in \C_B^{\rm{pp}}$ satisfies
    \[
        R\Gamma(\Spec(B),x^{\ast}\TT_{\M_{\C/A}}) \simeq \Hom^{\rm{Mod}_B}_{\C_B}(F,F)[1].
    \]
\end{thm}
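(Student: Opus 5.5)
This is the To\"en--Vaqui\'e representability theorem, so the plan is to recover it from the Artin--Lurie representability criterion applied to the prestack $\M_{\C/A}$. Since $B$ is a compact object of $\rm{Alg}_A$, I first reduce to a ``cellular'' situation. A compact $A$-algebra is a retract of a finite cell algebra, i.e.\ one obtained from $A$ by finitely many pushouts along free maps $\rm{Free}(A[n]) \to A$. The functor $B \mapsto \M_{\rm{RMod}_B/A}$ sends colimits of algebras to limits of prestacks. Indeed, a pseudo-perfect object of $\rm{RMod}_B \otimes_A \rm{Mod}_{B'}$ is a perfect $B'$-module $M$ together with an $A$-algebra map $B \to \rm{end}_{B'}(M)$, because $\Phi_F$ is determined by its value on the generator $B$. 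Hence
\[
    \M_{\rm{RMod}_B/A} \simeq \maps_{\rm{Alg}_A}(B, \rm{end}(\mathcal{E}))
\]
over $\rm{Perf}_A$, where $\mathcal{E}$ is the universal perfect complex. For free cells this mapping stack is the total space of the vector bundle stack associated to $\mathcal{E}^\vee\otimes\mathcal{E}[-n]$ over $\rm{Perf}_A$. Pushouts of algebras become fiber products of stacks, and retracts are handled by the standard retract argument. Since $\rm{Perf}_A$ is locally geometric and locally of finite type (it is the union of the geometric, finite-type substacks of complexes of bounded Tor-amplitude), and since finite limits and retracts preserve these properties, $\M_{\C/A}$ is locally geometric and locally of finite type.

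To compute the tangent complex, take a point $x$ corresponding to $F \in \C_B^{\rm{pp}}$ and a connective $B$-module $M$. I would compute the space of lifts of $x$ along the square-zero extension $B \oplus M$. Deformations of $F$ to $\C_{B\oplus M}$ fit into a fiber sequence whose fiber is $\Map_{\C_B}(F, F\otimes_B M[1])$, which is the standard deformation theory of objects in a linear category. Because $F$ is pseudo-perfect, $\Hom^{\rm{Mod}_B}_{\C_B}(F,F)$ is a perfect $B$-module; one sees this from the mapping-stack description, where it is computed by the relative cotangent sequence of $\maps(B,\rm{end}(\mathcal{E}))\to\rm{Perf}$. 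Perfectness lets me pull $M$ out:
\[
    \Map_{\C_B}(F,F\otimes_B M[1]) \simeq \Map_B(B, \Hom_{\C_B}(F,F)\otimes_B M[1]).
\]
This shows that the tangent complex at $x$ is $\Hom_{\C_B}(F,F)[1]$. The formula can also be cross-checked against the cell description: for a free cell the result reduces to the Hochschild-cochain computation of $\rm{end}(F)$.

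The main obstacle is the first step. One has to verify carefully that the left Kan extension defining $\M_{\C/A}$ agrees on all eventually coconnective finite-type affines with the mapping-stack description. One also has to check that local geometricity survives the retract when passing from a finite cell algebra to an arbitrary compact $B$; here I would invoke that retracts of locally geometric laft stacks with an obstruction theory remain so, via Lurie's criterion.
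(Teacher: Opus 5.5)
The paper gives no argument of its own here: the statement is quoted from To\"en--Vaqui\'e. Your outline is essentially their proof, transcribed from dg-categories to $\EE_1$-algebras, and it is correct as a sketch: present $B$ as a retract of a finite cell algebra, assemble the moduli stack from $\mathrm{Perf}_A$ and linear stacks by fiber products, pass to retracts via the representability criterion, and read off the tangent complex from trivial square-zero extensions.

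One justification needs repair, at the step $\Hom_{\C_{B'}}(F,F\otimes_{B'}M)\simeq\Hom_{\C_{B'}}(F,F)\otimes_{B'}M$, where $B'$ is the test ring. This needs $F$ to be compact in $\C_{B'}$; perfectness of $\Hom_{\C_{B'}}(F,F)$ alone does not give it. Compactness does hold: a compact $A$-algebra is smooth, because its bimodule cotangent complex is compact, so Proposition~\ref{prop:smooth_pp_implies_compact} applies.
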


%% file: results/forms.tex
\section{Forms and shifted symplectic structures}\label{sect:forms}
For the reader's convenience, we review \cite{brav_dyckerhoff_2}*{\S 5} on (closed) shifted $p$-forms and shifted symplectic structures. 
The only notable differences are that we use the language of stable linear $\infty$-categories and work over a discrete commutative $\CC$-algebra $A$ instead of a field of characteristic zero.
In particular, we review their construction of a shifted symplectic form on the moduli of objects given a left Calabi--Yau structure.

Throughout this section, we fix a discrete commutative $\CC$-algebra $A$ and we set $S = \Spec(A)$.

\subsection{Graded mixed complexes}

Recall from \cite{BZFN} that the affinization of the circle $S^1$ is $B\GG_{a,A}$, (we omit the subscript $A$ in the sequel), and that this induces an isomorphism
\[
    \rm{QCoh}(BS^1) \simeq \rm{QCoh}(B\GG_a).
\]
Furthermore, there is a right-split extension of group stacks
\[
    B\GG_a \to B\GG_a\rtimes\GG_m \to \GG_m,
\]
and passing to classifying stacks yields the following cartesian diagram:
\begin{equation}
    \xymatrix{
        B^2\GG_a \ar[r]^-{i} \ar[d]_-{\pi} & B(B\GG_a\rtimes\GG_m) \ar[d]_-{p} \\
        \Spec(A) \ar[r]_-{q} & B\GG_m \ar@/_1pc/[u]_-{j}.
    }
\end{equation}

Now we describe quasi-coherent sheaves on these stacks and the functors between them.
\begin{itemize}
    \item The category $\rm{QCoh}(B^2\GG_a)$ consists of $A$-modules with $S^1$-action.
    \item The category $\rm{QCoh}(B(B\GG_a\rtimes\GG_m))$ consists of mixed graded $A$-modules.
    \item The category $\rm{QCoh}(B\GG_m)$ consists of graded $A$-modules.
    \item The functor $\pi_{\ast}$ is taking $S^1$-invariants, i.e. taking negative cyclic homology, which we denote by $\rm{HN}(\mhyphen)$.
    \item The functor $p_{\ast}$ is taking weighted negative cyclic homology, which we denote by $\rm{HN}^{\rm{w}}(\mhyphen)$.
    \item The functor $j^{\ast}$ is forgetting the mixed structure.
    \item The functor $q^{\ast}$ (resp.~$i^{\ast}$) is taking the sum over the weight-graded components of a (mixed) graded $A$-module.
\end{itemize}
Recall that the morphism $q: \Spec(A) \to B\GG_m$ is perfect \cite{BZFN}*{Corollaries 3.22, 3.23}.
It follows that the morphism $i: B^2\GG_a \to B(B\GG_a\rtimes\GG_m)$ is perfect.
Therefore \cite{BZFN}*{Proposition 3.10} implies that the functors $q_{\ast},i_{\ast}$ are colimit-preserving, and thus admit right adjoints $q_{\ast}^R, i_{\ast}^R$ respectively.
These are given concretely by taking the product over the weight-graded components of (mixed) graded $A$-modules.
Moreover, there are natural transformations $q^{\ast} \to q_{\ast}^R$ and $i^{\ast}\to i_{\ast}^R$ given by mapping the direct sum to the product.
Finally, the fact that the following diagram commutes
\[
    \xymatrix{
        \rm{QCoh}(B^2\GG_a) \ar[r]^-{i_{\ast}} \ar[d]_-{\pi_{\ast}} & \rm{QCoh}(B(B\GG_a\rtimes\GG_m)) \ar[d]^-{p_{\ast}} \\
        \rm{QCoh}(\Spec(A)) \ar[r]_-{q_{\ast}} & \rm{QCoh}(B\GG_m),
    }
\]
implies that there is a natural transformation
\[
    \pi_{\ast}i_{\ast}^R \to q_{\ast}^Rp_{\ast}.
\]

As a consequence, we may observe the following 
\begin{prop}[=\cite{brav_dyckerhoff_2}*{Lemma 5.1} for $A$ a field of characteristic zero.]
    \label{prop:forms}
    Let $E \in \rm{QCoh}(B(B\GG_a\rtimes\GG_m))$ be a mixed graded $A$-module.
    Then there is a natural map
    \[
        \rm{HN}(i^{\ast}E) \to \prod_p \rm{HN}^{\rm{w}}(E)(p),
    \]
    which is given by the composition of functors
    \[
        \pi_{\ast}i^{\ast}\to \pi_{\ast}i_{\ast}^R \to q_{\ast}^Rp_{\ast}
    \]
    applied to $E$.
    Thus we obtain for each $p \in \ZZ$ a morphism of $A$-modules
    \[
        \rm{HN}(i^{\ast}E) \to \rm{HN}^{\rm{w}}(E)(p).
    \]
    Furthermore, we may apply the unit of the adjunction $j^{\ast}\dashv j_{\ast}$ to the above morphism, then take weighted pieces to obtain a morphism
    \[
        \rm{HN}^{\rm{w}}(E)(p) \to E(p).
    \]
\end{prop}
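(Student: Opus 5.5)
The plan is to build everything from the natural transformations already set up in this subsection, applying them to $E$ and then passing to components. First, the comparison map $i^{\ast} \to i_{\ast}^R$ is the canonical map from the direct sum of the weight-graded pieces to their product. This map respects the $S^1$-action, because the mixed structure is encoded by the $B\GG_a$-factor of $B\GG_a\rtimes\GG_m$ and is compatible with both the sum and the product decompositions. Applying $\pi_{\ast}$ then gives $\pi_{\ast}i^{\ast}E \to \pi_{\ast}i_{\ast}^R E$. Next, I compose with the Beck--Chevalley type map $\pi_{\ast}i_{\ast}^R \to q_{\ast}^Rp_{\ast}$. This map comes from the commutative square $p_{\ast}i_{\ast}\simeq q_{\ast}\pi_{\ast}$ by passing to right adjoints and using the units and counits: explicitly,
\[
\pi_{\ast}i_{\ast}^R \to q_{\ast}^R q_{\ast}\pi_{\ast} i_{\ast}^R \simeq q_{\ast}^R p_{\ast} i_{\ast} i_{\ast}^R \to q_{\ast}^R p_{\ast}.
\]
Since $\pi_{\ast}i^{\ast}E = \rm{HN}(i^{\ast}E)$ and $q_{\ast}^R p_{\ast}E = \prod_p \rm{HN}^{\rm{w}}(E)(p)$ (by the concrete description of $q_{\ast}^R$ as the product over weights), this produces the first map. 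Projecting to the $p$-th factor gives the morphism $\rm{HN}(i^{\ast}E)\to\rm{HN}^{\rm{w}}(E)(p)$.

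For the last map I would use the unit $\id\to j_{\ast}j^{\ast}$ on $\rm{QCoh}(B(B\GG_a\rtimes\GG_m))$, applied to $E$, and then push forward along $p$. Since $p\circ j = \id_{B\GG_m}$, we have $p_{\ast}j_{\ast}\simeq \id$. This gives
\[
p_{\ast}E \to p_{\ast}j_{\ast}j^{\ast}E \simeq j^{\ast}E,
\]
which is a map of graded $A$-modules. Taking the weight-$p$ component yields $\rm{HN}^{\rm{w}}(E)(p)\to E(p)$, where $E(p)$ denotes the weight-$p$ piece of the underlying graded module $j^{\ast}E$.

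The step I expect to need the most care is the well-definedness of the second transformation. It requires $q_{\ast}$ and $i_{\ast}$ to admit right adjoints, which is where perfectness of $q$, and hence of its base change $i$, is used via the BZFN results quoted above. It also requires identifying $q_{\ast}^R$ concretely as the product over weights. Once those identifications are in place, naturality in $E$ is automatic, since every map is a component of a natural transformation of functors.
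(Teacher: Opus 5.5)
Your proposal is correct and follows the paper's construction: the sum-to-product transformation $i^{\ast}\to i_{\ast}^R$, then the transformation $\pi_{\ast}i_{\ast}^R\to q_{\ast}^Rp_{\ast}$ induced by the square $p_{\ast}i_{\ast}\simeq q_{\ast}\pi_{\ast}$, then the unit of $j^{\ast}\dashv j_{\ast}$ pushed forward along $p$ using $p\circ j=\id$. The only difference is that you write out the unit/counit (mate) formula for the middle transformation, which the paper merely asserts follows from the commutative square.
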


\subsection{Closed differential forms}
We define (closed) differential $p$-forms of degree $n$ on laft prestacks via Kan extension from affine schemes over $S$.

Suppose that $U \in \tensor*[^{<\infty}]{\rm{SchAff}}{_{\rm{ft}/S}}$ is an eventually coconnective affine scheme of finite type over $S = \Spec(A)$.
Then there is an equivalence
\[
    \maps(B\GG_a,U) \simeq \maps(S^1,U) =: \L U
\]
of stacks over $S$, \cite{BZFN}.
The scaling action of $\GG_m$ on $\GG_a$ induces a $(B\GG_a\rtimes\GG_m)$-action on $B\GG_a$, so we ultimately get a $(B\GG_a\rtimes\GG_m)$-action on global functions on the loop space of $U$, $\Gamma(\L U,\O)$.
This action yields a weight-space decomposition
\[
    \Gamma(\L U,\O) \simeq \bigoplus_p \Gamma(\L U,\O)(p).
\]
\begin{defn}
    Let $U \in \tensor*[^{<\infty}]{\rm{SchAff}}{_{\rm{ft}/S}}$ be an eventually coconnective affine scheme of finite type over $S = \Spec(A)$.
    \begin{enumerate}
        \item The \emph{space of $p$-forms of degree $n$ on $U$} is
        \[
            \A^p(U/S,n) := \abs{\Gamma(\L U,\O)(p)[n-p]}.
        \]
        \item The \emph{space of closed $p$-forms of degree $n$ on $U$} is
        \[
            \A^{p,\cl}(U/S,n) := \abs{\rm{HN}^{\rm{w}}(\Gamma(\L U,\O))(p)[n-p]}.
        \]
    \end{enumerate}
\end{defn}
Recall that the functor
\[
    \abs{\mhyphen}: \rm{Mod}_A \to \rm{Ani}
\]
is the composite of taking connective covers $\tau_{\geq 0}: \rm{Mod}_A \to \rm{Mod}_A^{\rm{cn}}$ followed by $\Omega^{\infty}: \rm{Mod}_A^{\rm{cn}} \to \rm{Sp}^{\rm{cn}} \to \rm{Ani}$.

From Proposition \ref{prop:forms}, we obtain a morphism
\[
    \A^{p,\cl}(U/S,n) \to \A^p(U/S,n)
\]
that is natural in $U$.
This defines a morphism of prestacks
\[
    \A^{p,\cl}(\mhyphen/S,n) \to \A^{p}(\mhyphen/S,n)
\]
on $\rm{PreStk}_{\rm{laft}/S}$ via Kan extension.

Now we recall the central construction of \cite{brav_dyckerhoff_2}*{\S 5.2}.
Suppose that $\X$ is a laft prestack over $S$.
We may tautologically write $\X$ as the colimit of affine $S$-schemes over $\X$
\[
    \X = \rm{colim}\,U.
\]
Then we have an equivalence
\begin{gather*}
    \A^{p,\cl}(\X/S,p-n) := \rm{maps}_S(\X,\A^{p,\cl}(\mhyphen/S,p-n)) \simeq \lim \A^{p,\cl}(U/S,p-n) \\
    \simeq \lim \abs{\rm{HN}^{\rm{w}}(\Gamma(\L U,\O))(p)[-n]}.
\end{gather*}

Recall that if $T$ is an $S$-scheme, then the $\infty$-category of pseudo-perfect objects  in $\C_T$ may be identified with $A$-linear exact functors $\C^{c,\op} \to \rm{Perf}(T)$.
Suppose that $\C^{c,\op}$ is equivalent to $\C^c$ from now on, (which holds for our main example of interest).
Then by taking ind-completions, we may identify pseudo-perfect objects of $\C_T$ with continuous adjunctions
\[
    \C \rightleftharpoons \rm{IndPerf}(T),
\]
i.e. an adjunction with colimit-preserving right adjoint.
In particular, there is a universal continuous adjunction
\[
    \C \rightleftharpoons \rm{IndPerf}(\M_{\C/A}).
\]

From the universal continuous adjunction, we obtain a commutative diagram
\[
    \xymatrix{
        \rm{HN}(\C) \ar[r] \ar[d] & \rm{HN}(\rm{IndPerf}(\M_{\C/A})) \ar[d] \\
        \rm{HH}(\C) \ar[r] & \rm{HH}(\rm{IndPerf}(\M_{\C/A})).
    }
\]
There is a natural map
\[
    \rm{IndPerf}(\M_{\C/A}) \to \lim\rm{IndPerf}(U) = \lim\rm{QCoh}(U),
\]
where the limit is taken over affine $S$-schemes $U$ over $\M_{\C/A}$,
and functoriality of negative cyclic and Hochschild homology yields morphisms
\begin{gather*}
    \rm{HN}(\rm{IndPerf}(\M_{\C/A})) \to \lim\rm{HN}(\rm{QCoh}(U)),\\
    \rm{HH}(\rm{IndPerf}(\M_{\C/A})) \to \lim\rm{HH}(\rm{QCoh}(U)).
\end{gather*}
The identifications
\[
    \rm{HH}(\rm{QCoh}(U)) \simeq \Gamma(\L U,\O),\quad \rm{HN}(\rm{QCoh}(U)) \simeq \Gamma(\L U,\O)^{S^1},
\]
yield, for each $p \in \ZZ$, a morphism
\[
    \rm{HN}(\C) \to \lim_{U \to \M_{\C/A}}\rm{HN}^{\rm{w}}(\Gamma(\L U,\O))(p).
\]

After shifting and passing to spaces, we obtain a commutative diagram
\begin{equation}
    \label{eq:bd_construction}
    \xymatrix{
        \abs{\rm{HN}(\C)[-n]} \ar[r]^-{\tilde{\kappa}_p} \ar[d] & \A^{p,\cl}(\M_{\C/A}/S,p-n) \ar[d] \\
        \abs{\rm{HH}(\C)[-n]} \ar[r]_-{\kappa_p} & \A^p(\M_{\C/A}/S,p-n).
    }
\end{equation}

\subsection{From left Calabi--Yau structures to shifted symplectic forms}
We state and prove our main theorem, which produces a $(2-d)$-shifted relative shifted symplectic structure on $\M_{\C/A}$ given a left $d$-Calabi--Yau structure on a finite type $A$-linear category $\C$.
This is a straightforward generalization of \cite{brav_dyckerhoff_2}*{Theorem 5.6(1)}.

\begin{thm}
\label{thm:CY_str_to_sss}
    Let $A$ be a discrete, commutative $\CC$-algebra.
    Let $\C$ be a compactly generated, smooth and finite type $A$-linear category, and let $\M_{\C/A}$ be the derived moduli stack of pseudo-perfect objects.
    Let $\eta: A[d] \to \rm{HN}(\C)$ be a left $d$-Calabi--Yau structure on $\C$.
    Then $\eta$ induces a $(2-d)$-shifted relative symplectic structure on $\M_{\C/A}$.
    \begin{proof}
        From the diagram (\ref{eq:bd_construction}) with $p = 2$, the morphism
        \[
            \eta: A[d] \to \rm{HN}(\C)
        \]
        induces a closed $2$-form of degree $2-d$ on $\M_{\C/A}$, which we denote by $\tilde{\kappa}_2(\eta)$.
        It only remains to show that the underlying $2$-form $\kappa_2(\eta)$, of degree $2-d$, is non-degenerate.
        In other words, we must check that $\kappa_2(\eta)$ induces an equivalence
        \[
            \TT_{\M_{\C/A}} \isomto \LL_{\M_{\C/A}}[2-d].
        \]

        Towards this end, suppose that $x: \Spec(B) \to \M_{\C/A}$ is a morphism that classifies a pseudo-perfect object $F \in \C_B^{\rm{pp}}$.
        Then $F$ is also compact in $\C_B$ (Proposition \ref{prop:smooth_pp_implies_compact}), and Lemma \ref{lem:christ_lemma} implies that we have an equivalence
        \[
            \Hom^{\rm{Mod}_B}(F,F[d])^{\vee} \simeq \Hom^{\rm{Mod}_B}(\id^!_{\C_B}(F[d]),F).
        \]
        
        The image of a class $\theta \in \rm{HN}_d(\C)$ in $\rm{HH}_d(\C)$ yields, for each $\Spec(B)$, a morphism of endofunctors
        \[
            \theta': \id^!_{\C_B}[d] \to \id_{\C_B},
        \]
        since $\C_B$ is smooth as a $\rm{Mod}_B$-module.
        The resulting $2$-form $\kappa_2(\theta')$ on $\M_{\C/A}$ induces a morphism
        \[
            x^{\ast}\TT_{\M_{\C/A}} \to x^{\ast}\LL_{\M_{\C/A}}[2-d]
        \]
        that is given by the composite
        \begin{equation}\label{eq:id_of_pairing}
            \xymatrix{
                \Hom^{\rm{Mod}_B}(F,F)[1] \ar[d]^-{(\mhyphen)\circ\theta'(F)} \\
                \Hom^{\rm{Mod}_B}(\id^!(F[d]),F)[1] \ar[r]^-{\sim} & \Hom^{\rm{Mod}_B}(F[d],F)^{\vee}[1] \ar[d]^-{\simeq} \\
                & (\Hom^{\rm{Mod}_B}(F,F)[1])^{\vee}[2-d].
            }
        \end{equation}
        Since $\eta$ is a left $d$-Calabi--Yau structure, the morphism $\eta'$ is an equivalence, which implies that $(\mhyphen)\circ\eta'(F)$ is an isomorphism for any pseudo-perfect object $F \in \C_B$.
        Thus, the pairing induced by $\kappa_2(\eta)$ is non-degenerate, and we have obtained a $(2-d)$-shifted symplectic structure on $\M_{\C/A}$.
    \end{proof}
\end{thm}

%% file: results/sheaves.tex
\section{Moduli of sheaves}\label{sect:sheaves}
Let $p: X \to S$ be a smooth, quasi-projective family of Calabi--Yau $d$-folds over an affine base.
Our main object of interest is the derived moduli stack $\M_{X/S}$ of flat families of properly supported coherent sheaves on $X/S$.
We explain how this is a substack of the moduli stack of pseudo-perfect objects in the $\rm{QCoh}(S)$-module category $\C = \rm{IndCoh}(X)$, and verify that $\C$ satisfies the hypotheses of Theorem \ref{thm:CY_str_to_sss}.
Finally, we discuss some specific examples of families $X/S$ with an eye towards future applications.

\subsection{Setup}\label{sect:geometric_setup}
We fix the following notation/setup for this section.
Fix a smooth, finite type $\CC$-algebra $A$, set $S := \Spec(A)$, and let $p: X \to S$ be a family of smooth quasi-projective Calabi--Yau $d$-folds.
In other words, the morphism $p$ is smooth and quasi-projective, and there is an isomorphism
\begin{equation}
\label{eq:volume_form}
    \O_X \isomto \Omega^d_{X/S}.
\end{equation}
Our $\infty$-category of interest is $\C := \rm{IndCoh}(X) \simeq \rm{QCoh}(X)$ considered as an $A$-linear category.

\subsection{Finiteness conditions}
We begin by describing a duality datum for $\C \in \rm{Pr}^L_{\omega,A}$ in terms of the six-functor formalism of ind-coherent sheaves developed in \cite{GR1}.

Both $X$ and $S$ are smooth over $\CC$, so the functors $\Psi_X$ and $\Psi_S$ are equivalences between their respective categories of ind-coherent and quasi-coherent sheaves.
Therefore \cite{BZFN}*{Theorem 4.7} implies that there is a canonical equivalence
\[
    \rm{IndCoh}(X)\otimes_{\rm{QCoh}(S)}\rm{IndCoh}(X) \isomto \rm{IndCoh}(X\times_S X).
\]

The morphism $p: X \to S$ is smooth, hence Gorenstein.
Thus, the relative dualizing sheaf $\omega_{X/S}$ identifies with
\[
    \omega_{X/S}:= p^{\rm{QCoh},!}\O_S\simeq \Omega^d_{X/S}[d].
\]
Furthermore, the fact that $p$ is Gorenstein implies that the natural transformation
\[
    \Xi_X\circ p^{\rm{QCoh},!} \to p^! \circ \Xi_S,
\]
is an equivalence.

Consider the following putative duality datum:
\begin{gather}\label{eq:duality_datum}
\begin{split}
    &\rm{coev}_{X/S}: \\
    &\xymatrix{
        \rm{QCoh}(S) \ar[r]^-{\Xi_S}_-{\sim} \ar[d]_-{p^{\rm{QCoh},!}} & \rm{IndCoh}(S) \ar[d]^-{p^!} \\
        \rm{QCoh}(X) \ar[r]^-{\sim}_-{\Xi_X} & \rm{IndCoh}(X) \ar[r]^-{(\Delta_{X/S})_{\ast}^{\rm{IndCoh}}} & \rm{IndCoh}(X\times_S X) \ar[d]^-{\simeq} \\
        & & \rm{IndCoh}(X)\otimes_{\rm{QCoh}(S)}\rm{IndCoh}(X).
    }\\
    &\rm{ev}_{X/S}: \\
    &\xymatrix{
        \rm{IndCoh}(X)\otimes_{\rm{QCoh}(S)}\rm{IndCoh}(X) \ar[d]_-{\simeq} \\
        \rm{IndCoh}(X\times_S X) \ar[r]^-{\Delta_{X/S}^!} & \rm{IndCoh}(X) \ar[r]^-{p_{\ast}^{\rm{IndCoh}}} & \rm{IndCoh}(S) \ar[d]^-{\Psi_S}_-{\simeq} \\
        & & \rm{QCoh}(S).
    }
\end{split}
\end{gather}
One may verify the triangle identities via the correspondences given by the following commutative diagram and its reflection along the line $y = -x$:
\begin{equation}
    \xymatrix{
        X \ar[r]^-{\Delta_{X/S}} \ar[d]_-{\Delta_{X/S}} & X\times_S X \ar[r]^-{\id\times p} \ar[d]^-{\id\times\Delta_{X/S}} & X \\
        X\times_S X \ar[r]^-{\Delta_{X/S}\times\id} \ar[d]_-{p\times\id}& X\times_S X\times_S X \\
        X.
    }
\end{equation}
Thus we see that $\C = \rm{IndCoh}(X)$ is dualizable in $\rm{Pr}^L_{\omega,A}$.
Proposition \ref{prop:dualizable_implies_compact} implies that $\C$ is a compact object of $\rm{Pr}^L_{\omega,A}$, and hence of finite type.

Notice that the coevaluation morphism (\ref{eq:duality_datum}) is given by the composite functor
\[
    \rm{coev}_{X/S} = (\Delta_{X/S})_{\ast}^{\rm{IndCoh}}\circ p^!\circ \Xi_S,
\]
and that 
\[
    p^! \simeq p^{\rm{IndCoh},\ast}[d].
\]
It follows that $\rm{coev}_{X/S}$ has a right adjoint given by the composite
\begin{equation}
    \label{eq:coev_right_adjoint}
    (\rm{coev}_{X/S})^R \simeq \Psi_S \circ p^{\rm{IndCoh}}_{\ast}[-d]\circ \Delta_{X/S}^!,
\end{equation}
and that $\C$ is a smooth $A$-linear category.

We record these observations as the following
\begin{prop}
    \label{prop:smooth_and_finite_type}
    Assume Setup \ref{sect:geometric_setup}.
    Then $\C$ is a smooth and finite type $A$-linear category.
    In particular, there exists a compact $A$-algebra $B \in \rm{Alg}_A$ such that $\C$ is equivalent, as an $A$-linear category, to the category $\rm{RMod}_B$.
    \begin{proof}
        We have already seen that $\C$ is smooth and finite type.
        Proposition \ref{prop:compact_generator} implies the second statement.
    \end{proof}
\end{prop}

\subsection{Hochschild homology}
The computation of Hochschild homology for $\C$ in $\rm{Mod}_A$ is entirely parallel to the computation in \cite{brav_dyckerhoff_1}*{\S 5.2}.
The only difference is that we work with the relative dualizing complex instead of the absolute dualizing complex.

We may use the description of Hochschild homology in terms of traces to compute that
\begin{multline}
    \rm{HH}(\C) \simeq p_{\ast}^{\rm{IndCoh}}\Delta_{X/S}^{!}(\Delta_{X/S})_{\ast}^{\rm{IndCoh}}\omega_{X/S} \\ 
    \simeq (X\times_S X \to S)_{\ast}^{\rm{IndCoh}}\hom_{X\times_S X}(\Delta_{\ast}^{\rm{IndCoh}}\O_X,\Delta_{\ast}^{\rm{IndCoh}}\omega_{X/S}).
\end{multline}
Observe that
\begin{equation*}
    \rm{HH}_i(\C) \simeq \rm{Ext}_A^{-i}(\Delta_{\ast}\O_X,\Delta_{\ast}\omega_{X/S}) \simeq \rm{Ext}_A^{d-i}(\Delta_{\ast}\O_X,\Delta_{\ast}\Omega^d_{X/S}).
\end{equation*}
Therefore, the ordinary $A$-module $\rm{HH}_i(\C)$ vanishes for all $i > d$, since $\O_X$ and $\Omega^d_{X/S}$ are ordinary sheaves.
Moreover, for $i = d$ we have
\[
    \rm{HH}_d(\C) \simeq \rm{Ext}_A^0(\Delta_{\ast}\O_X,\Delta_{\ast}\Omega^d_{X/S}) \simeq H^0(X,\Omega^d_{X/S}).
\]
Finally, \cite{brav_dyckerhoff_1}*{Lemma 5.10} implies that for all $i \geq d$, we have an isomorphism
\[
    \rm{HN}_i(\C) \isomto \rm{HH}_i(\C),
\]
(their proof is stated for the case when $A$ is a field of characteristic zero, but works over a discrete commutative ring containing $\QQ$).

\subsection{Left Calabi--Yau structure}
The left $d$-Calabi--Yau structure constructed in \cite{brav_dyckerhoff_1}*{\S 5.2.2} generalizes over a base.
\begin{prop}
\label{prop:cy_structure}
    Assume we are in Setup \ref{sect:geometric_setup}.
    Then any trivialization 
    \[
        \O_X \isomto \Omega^d_{X/S}
    \]
    gives a left $d$-Calabi--Yau structure on $\C = \rm{IndCoh}(X)$ as an $A$-linear category.
    \begin{proof}
        Suppose we have a trivialization 
        \[
            \rm{vol}_{X/S}: \O_X \isomto \Omega^d_{X/S}.
        \]
        We obtain a negative cyclic homology class $\eta$, of homological degree $d$ via the identifications
        \[
            \rm{HN}_d(\C) \simeq \rm{HH}_d(\C) \simeq \rm{Ext}_A^{0}((\Delta_{X/S})_{\ast}\O_X,(\Delta_{X/S})_{\ast}\omega_{X/S}[-d]) \simeq H^0(X,\Omega^d_{X/S}).
        \]
        Since $\Delta_{\ast}\rm{vol}_{X/S}$ is an equivalence, we see that $\eta$ is a left $d$-Calabi--Yau structure.
    \end{proof}
\end{prop}

\subsection{The moduli of objects}
We describe the moduli of pseudo-perfect objects for $\C = \rm{IndCoh}(X)$ as an $A$-linear category.
Note that Proposition \ref{prop:smooth_and_finite_type} implies that we may apply the To\"en--Vacqui\'e Representability Theorem to $\C$.

Let $T$ be an eventually coconnective finitely presented scheme over $S = \Spec(A)$.
Then \cite{BZNP}*{Theorem 3.0.2} implies that $\F \in \C_T^{\rm{ps}}$ if and only if $\F$ is coherent and has proper support relative to $T$.
(This fact is the reason for the terminology in Definition \ref{defn:proper_support_pseudo_perfect}).
Recall that this means there exists a closed immersion $Z \to X_T$ such that $\F|_{X_T\setminus Z}\simeq 0$ and the composite $Z \to T$ is proper.
Since $\rm{Perf}(X_T) \subset \rm{Coh}(X_T)$, each pseudo-perfect object is properly supported.
Moreover, Proposition \ref{prop:smooth_pp_implies_compact} implies that each pseudo-perfect object $\F \in \rm{QCoh}(X_T)$ is perfect, since $\rm{QCoh}(X_T)^c$ equals $\rm{Perf}(X_T)$ as subcategories of $\rm{QCoh}(X_T)$.

We prove the converse in the following
\begin{prop}
    \label{prop:perfect_and_proper_support_implies_pseudo-perfect}
    Assume that $p: X \to S$ is a smooth morphism of classical varieties over $\CC$.
    Let $T$ be an eventually coconnective scheme over $S$, and let $\F \in \rm{Perf}(X\times_S T)$ be a perfect complex that has proper support over $T$.
    Then $\F$ is pseudo-perfect.
    \begin{proof}
        We prove the statement for $T = S$, and the general case follows from this one by base change.

        Let $\F \in \rm{Perf}(X)$ be a perfect complex, and suppose that $Z$ is a derived scheme with a closed immersion $i: Z \to X$ such that $\F|_{X\setminus Z} \simeq 0$ and such that the composite $Z \to S$ is proper.
        We must show that for any $\G \in \rm{Perf}(X)$, the quasi-coherent sheaf $p_{\ast}\hom_X(\G,\F)$ lies in $\rm{Perf}(S)$.
        The sheaf $\hom_X(\G,\F)$ is perfect and moreover properly supported on $Z$ \cite{lurie_SAG}*{Corollary 7.1.5.6}.
        Therefore, it suffices to show that if $\E \in \rm{Perf}(X)$ is properly supported, then $p_{\ast}\E \in \rm{Perf}(S)$.

        The closed immersion $i: Z \to X$ induces a morphism $\hat{\imath}: X^{\wedge}_Z \to X$ from the formal completion of $X$ along $Z$ to $X$.
        The pullback functor $\hat{\imath}^{\ast}$ has a left adjoint $\hat{\imath}_?$, and the fact that $\E$ is properly supported on $Z$ implies that $\E \simeq \hat{\imath}_?\hat{\imath}^{\ast}\E$ \cite{halpernleistner_preygel}*{Theorem 2.2.3(i-ii)}.
        Furthermore, the object $\hat{\imath}^{\ast}\E$ is compact in $\rm{QCoh}(X^{\wedge}_Z)$.
        Thus, we are reduced to showing that if $\E \in \rm{QCoh}(X^{\wedge}_Z)$ is compact, then so is $p_{\ast}\hat{\imath}_?\E \in \rm{QCoh}(S)$.
        This is the content of the following Lemma \ref{lem:end_of_compactness_proposition}.
    \end{proof}
\end{prop}

\begin{lem}
    \label{lem:end_of_compactness_proposition}
    Suppose that $p: X \to S$ is a smooth morphism of classical varieties over $\CC$.
    Let $i: Z \to X$ be a closed immersion from a derived scheme of finite type such that the composite $Z \to S$ is proper.
    Write $X^{\wedge}_Z$ for the formal completion of $X$ along $Z$, and let $\E$ be a compact object in $\rm{QCoh}(X^{\wedge}_Z)$.
    Then the object $p_{\ast}\hat{\imath}_?\E$ is compact in $\rm{QCoh}(S)$.
    \begin{proof}
       Consider the space \begin{equation}\label{eq:colimit_inside_map}
            \Map_{\rm{QCoh}(S)}(p_{\ast}\hat{\imath}_?\E,\rm{colim}_i \F_i).
        \end{equation}
        The functor $\Xi_S: \rm{QCoh}(S) \to \rm{IndCoh}(S)$ is fully faithful \cite{gaitsgory_indcoh}*{Proposition 1.5.3}, so we have an equivalence
        \[
            \Map_{\rm{QCoh}(S)}(p_{\ast}\hat{\imath}_?\E,\rm{colim}_i \F_i) \simeq \Map_{\rm{IndCoh}(S)}(\Xi_Sp_{\ast}\hat{\imath}_?\E,\Xi_S\rm{colim}_i\F_i).
        \]
        We may ``move the functor $\Xi$ past the two pushforward morphisms'' via the following two equivalences
        \begin{equation*}
            \Xi_S \circ p_{\ast} \simeq p_{\ast}^{\rm{IndCoh}}\circ \Xi_X, \quad\quad
            \Xi_X \circ \hat{\imath}_? \simeq \hat{\imath}_{\ast}^{\rm{IndCoh}}\circ \Xi_{X^{\wedge}_Z},
        \end{equation*}
        (see \cite{gaitsgory_indcoh}*{Proposition 3.6.7} and \cite{GR_dgindschemes}*{\S 7.6.2} respectively).
        Thus the space (\ref{eq:colimit_inside_map}) is equivalent to the space
        \[
            \Map_{\rm{IndCoh}(S)}((p\circ\hat{\imath})_{\ast}^{\rm{IndCoh}}\Xi_{X^{\wedge}_Z}\E,\Xi_S\rm{colim}_i\F_i).
        \]
        The map $p\circ\hat{\imath}$ is ind-proper, so we have an adjunction $(p\circ\hat{\imath})_{\ast}^{\rm{IndCoh}} \dashv (p\circ\hat{\imath})^!$.
        Furthermore, we may ``move the functor $\Xi$ past the two pullback morphisms'' via the following two equivalences
        \begin{equation*}
            p^!\circ \Xi_S \simeq \Xi_X \circ p^{\rm{QCoh},!},\quad\quad 
            \hat{\imath}^!\circ \Xi_X \simeq \Xi_{X^{\wedge}_Z}\circ \hat{\imath}^{\ast},
        \end{equation*}
        (see \cite{gaitsgory_indcoh}*{Proposition 7.4.7} and \cite{GR_dgindschemes}*{Proposition 7.6.4} respectively).
        By the Adjoint Functor Theorem, the functors $\hat{\imath}^{\ast}$, $p^{\rm{QCoh},!}$ commute with colimits, since they have right adjoints $\hat{\imath}_{\ast}$ and $p_{\ast}\hom(\omega_{X/S},\mhyphen)$ respectively.

        There is an adjunction $\Upsilon_{X^{\wedge}_Z}\dashv \Xi^{\vee}_{X^{\wedge}_Z}$ with colimit-preserving right adjoint \cite{GR_dgindschemes}*{\S 7.6.2}, which dualizes to an adjunction $\Xi_{X^{\wedge}_Z}\dashv \Upsilon^{\vee}_{X^{\wedge}_Z}$ with colimit-preserving right adjoint.
        Therefore, the functor $\Xi_{X^{\wedge}_Z}$ preserves compact objects, so we may conclude that the space (\ref{eq:colimit_inside_map}) is equivalent to
        \begin{align*}
            \rm{colim}_i \Map_{\rm{IndCoh}(X^{\wedge}_Z)}(\Xi_{X^{\wedge}_Z}\E,\hat{\imath}^!p^!\Xi_S \F_i) &\simeq \Map_{\rm{IndCoh}(S)}(\Xi_S p_{\ast}\hat{\imath}_?\E,\Xi_S\F_i) \\
            &\simeq \Map_{\rm{QCoh}(S)}(p_{\ast}\hat{\imath}_?\E,\F_i).
        \end{align*}
        Thus we have shown that $p_{\ast}\hat{\imath}_?\E$ is compact.
    \end{proof}
\end{lem}

Proposition \ref{prop:perfect_and_proper_support_implies_pseudo-perfect} has the following
\begin{cor}
    \label{cor:pseudo_perfect_sheaves}
    Assume we are in Setup \ref{sect:geometric_setup}.
    Let $T$ be an eventually coconnective scheme over $S = \Spec(A)$.
    Then the pseudo-perfect objects of $\C_T = \rm{QCoh}(X\times_S T)$ are exactly the perfect complexes with proper support.
    \begin{proof}
        Proposition \ref{prop:perfect_and_proper_support_implies_pseudo-perfect} implies that perfect complexes with proper support are pseudo-perfect.
        On the other hand, pseudo-perfect objects are properly supported by \cite{BZNP}*{Theorem 3.0.2} and perfect since $\C$ is a smooth $A$-linear category.
    \end{proof}
\end{cor}

We may apply the Representability Theorem of \cite{toen_vacquie_moduli_of_objects} to conclude that the moduli of pseudo-perfect objects in $\C = \rm{IndCoh}(X)$ is geometric.
However, we want to add a flatness condition to obtain a geometric substack of $\M_{\C/A}$ whose classical truncation is the familiar moduli stack of flat families of coherent sheaves with proper support.
This has already been considered in e.g. \cite{porta_sala_2dCatHA}, and we recall the relevant details now.

\begin{defn}[=\cite{porta_sala_2dCatHA}*{Definition 2.1}]
    Let $p: X \to S$ be a morphism of derived Say that a connective almost perfect complex $\F \in \rm{APerf}(X)^{\rm{cn}}$ \emph{has tor-amplitude $\leq n$ with respect to $S$} if the following conditions holds:
    \[
        \pi_i(\F \otimes p^{\ast}\G) = 0
    \]
    for all $\G \in \rm{QCoh}(S)^{\heartsuit}$ and for all $i \notin [0,n]$.
    Say that $\F$ is \emph{flat over $S$} if $\F$ has tor-amplitude $\leq 0$ relative to $S$.
\end{defn}

\begin{notation}
    Assume we are in Setup \ref{sect:geometric_setup}.
    Let $\M_{X/S}$ be the derived moduli stack that sends an affine scheme $T$ over $S$ to the full subcategory of $\C_T^{\rm{pp},\simeq}$ consisting of complexes $\F$ that are connective and flat over $T$.
\end{notation}

\begin{prop}
    The following statements hold:
    \begin{enumerate}
        \item There is a natural morphism 
        \[
            \M_{X/S} \to \M_{\C/A}.
        \]
        \item The classical truncation of $\M_{X/S}$ is the classical stack of coherent sheaves on $X$ that are flat and properly supported over $S$.
        \item The prestack $\M_{X/S}$ is a geometric derived stack locally of finite type over $S$.
    \end{enumerate}
    \begin{proof}
        The first statement is obvious.

        For the second statement, apply the Derived Lazard Theorem \cite{lurie_HA}*{Theorem 7.2.2.15}.

        For the third statement, we appeal to Pridham's version \cite{pridham_representability}*{Theorem 2.17} of Lurie's Representability Theorem.
        Since the classical truncation of $\M_{X/S}$ is algebraic, it suffices to show that $\M_{X/S}$ is homotopically homogeneous.
        This is done in \cite{porta_sala_2dCatHA}*{\S 2.2, \S 2.3.1}.
        Note that in order to apply their results, we have used Corollary \ref{cor:pseudo_perfect_sheaves} to identify $\M_{\C/A}$ with the moduli stack of properly supported perfect complexes on $X$.
    \end{proof}
\end{prop}

\subsection{The relative shifted symplectic structure}
Proposition \ref{prop:cy_structure} implies that, whenever we are in Setup \ref{sect:geometric_setup}, we have a left $d$-Calabi--Yau structure on the $A$-linear category $\C = \rm{IndCoh}(X)$.
We may apply Theorem \ref{thm:CY_str_to_sss} to obtain a relative $(2-d)$-shifted symplectic structure on $\M_{\C/A}$.
There is a natural map of $S$-stacks $\M_{X/S} \to \M_{\C/A}$, and we may pull back the closed $2$-form on degree $2-d$ from the latter to the former.
In order to deduce that $\M_{X/S}$ has a $(2-d)$-shifted symplectic structure, we simply observe that the pulled back $2$-form remains non-degenerate.

Moreover, given any closed point $s \in S(\CC)$, the relative $(2-d)$-shifted symplectic form on $\M_{X/S}$ restricts to the usual $(2-d)$-shifted symplectic form on $\M_{X_s}$.
This is clear from the description of left $d$-Calabi--Yau structures on $\C$ and $\C\otimes_A \kappa(s)$ and from the base change properties of negative cyclic and Hochschild homology.

We may also consider substacks of $\M_{X/S}$ obtained by imposing semistability conditions and fixing topological invariants.
The shifted symplectic form on $\M_{X/S}$ pulls back to any substack obtained this way.

\subsection{Two kinds of examples}
Here we describe two kinds of examples of smooth, quasi-projective Calabi--Yau $3$-folds $X/S$ to which our main theorem applies. 
In both kinds of examples, we take $S$ to be the affine line.
We focus on families of CY3s because in a future work, we intend to use the relative symplectic structure on the moduli space of coherent sheaves to compare the enumerative geometries of the generic fiber $X_{\eta}$ and special fiber $X_0$.

\subsubsection{Classical deformation to the normal cone}
Fix a smooth, projective complex Calabi--Yau $3$-fold $Y$, and let $i: Z \to Y$ be the inclusion of a smooth, proper subvariety.
(Such a subvariety may be obtained as a complete intersection of hyperplanes on $Y$).
The Adjunction Formula implies that any such subvariety $Z$ on $Y$ satisfies
\[
    \det(N_{Z/Y}) \simeq \Omega^{\dim(Z)}_Z,
\]
so in particular the total space of the normal bundle $\NN_{Z/Y} := \Spec_Z(\rm{Sym}^{\bullet}N_{Z/Y}^{\vee})$ is a non-compact CY3.

Next we recall some details about the (classical) deformation to the normal cone.
More details can be found in \cite{fulton_intersection_theory}*{\S 5.1}.
The point of this discussion is to verify that the deformation to the normal cone, in this setting, is a family of smooth quasi-projective CY3s.

Define $\ol{X}$ to be the blowup of $Z\times\{0\}$ in $Y\times \AA^1$, and let $\pi: \ol{X} \to \AA^1$ be the composition of the blow-down map with the projection to $\AA^1$.
Note that $\ol{X}$ is smooth over $\CC$, since the center of the blow-up is smooth and $Y\times\AA^1$ is smooth.
Thus the map $\pi$ is flat and projective.

Let $\PP$ be the standard projective compactification of $\NN_{Z/Y}$, i.e. 
\[ \PP := \PP(N_{Z/Y}\oplus \O_Z) = \NN_{Z/Y} \sqcup \PP(N_{Z/Y}). \]
The exceptional divisor of the blowup $\ol{X}$ is $\PP$, since the normal cone to $Z\times\{0\}$ in $Y \times \AA^1$ is $N_{Z/Y}\oplus \O_Z$.
Recall that $\ol{X}_t \cong Y$ for $t \neq 0$ and
\[
    \ol{X}_0 \cong \PP \bigcup_{\PP(N_{Z/Y})} \rm{Bl}_Z(Y).
\]
Define $X := X\setminus(\rm{Bl}_Z(Y)\times\{0\})$, so that $\ol{X}_t = X_t$ for $t \neq 0$ and $X_0 = \NN_{C/Y}$.
Then $X \to \AA^1$ is still flat, but no longer projective as the central fiber is not proper. 

The deformation to the normal cone can also be described via the Rees algebra and relative spectrum construction.
Let $\I$ be the ideal sheaf of $Z$ in $Y$.
Define the quasi-coherent $\O_Y[t]$-algebra
\[
    S^{\bullet} := \bigoplus_{n\in \ZZ}t^n\I^{-n},\quad \I^{\leq 0} = \O_Y.
\]
Then $X$ is the relative spectrum $\Spec_{Y\times\AA^1}(S^{\bullet})$, and the composition 
\[
    \CC[t] \to \O_Y[t] \to S^{\bullet}
\]
corresponds to the blow-down map $X \to Y\times\AA^1$ followed by projection to $\AA^1$.
Note that $X\to\AA^1$ is a smooth morphism.

Finally, we describe the relative canonical bundle and volume form for $X/\AA^1$.
The formula for canonical bundles of blow-ups \cite{hart77}*{Ex.~II.8.5} implies that there is an isomorphism
\[
    \Omega^3_{\ol{X}/\CC} \cong (\ol{X}\to Y\times\AA^1)^{\ast}\Omega^4_{(Y\times\AA^1)/\CC}\otimes \O_{\ol{X}}((\rm{codim}_Y Z)\PP).
\]
The canonical bundle of $Y\times\AA^1$ is trivial, so we deduce that
\[
    \Omega^3_{\ol{X}/\CC} \cong \O_{\ol{X}}((\rm{codim}_Y Z)\PP).
\]
To compute the relative top-degree differentials $\Omega^3_{\ol{X}/\AA^1}$, recall that $\ol{X}$ and $\AA^1$ are both smooth over $\CC$ and that $\Omega^1_{\AA^1/\CC}$ is trivial. 
Hence by \cite{kleiman_relative_duality}*{Corollary 24} we see that
\[
    \Omega^3_{\ol{X}/\AA^1}\cong \O_{\ol{X}}((\rm{codim}_Y Z)\PP).
\]
We conclude that
\[
    \Omega^3_{X/\AA^1} \cong \O_{X}((\rm{codim_Y Z})\NN_{Z/Y}),
\]
by applying \cite{stacks-project}*{\href{https://stacks.math.columbia.edu/tag/0E30}{TAG 0E30}} and the fact that the relative dualizing complex of an open immersion is the structure sheaf of the open subscheme.
Note that $(\rm{codim}_Y Z)\NN_{Z/Y}$ is a principal Weil divisor on $X$ that corresponds to the principal Cartier divisor given by $t^{\rm{codim}_Y Z} \in \Gamma(X,\K^{\ast}_X)$.
Therefore, the line bundle $\Omega^3_{X/\AA^1}$ is trivializable, and we may choose a volume form to construct a relative $(-1)$-symplectic structure on $\M_{X/\AA^1}$.

We briefly discuss polarizations with which one may define semistability conditions.
If $L$ be an ample line bundle on $Y$, then $(X\to Y)^{\ast}L$ is relatively ample on $X/\AA^1$.
Indeed, this is because the composite $X \to Y\times\AA^1 \to Y$ is affine, hence quasi-affine, so we conclude using \cite{stacks-project}*{\href{https://stacks.math.columbia.edu/tag/0892}{TAG 0892},\href{https://stacks.math.columbia.edu/tag/01VK}{TAG 01VK}}.
So any line bundle $L$ on $Y$ gives us a relatively ample line bundle on $X$ with which we may define semistability conditions.

\subsubsection{1-parameter families of local curves}
Fix a smooth, complex projective curve $C$.
Let $L$ be a line bundle on $C$ (of high degree), and set $M := L^{-1}\otimes \Omega^1_C$ so that $\det(L \oplus M) \cong L\otimes M \cong \Omega^1_C$.
Choose a line $\AA^1 \to \rm{Ext}^1_C(L,M)$ through the origin.
Let $V$ be the pullback of the universal bundle on $C\times\rm{Ext}^1_C(L,M)$ to $C\times \AA^1$ and let $X$ be the total space of this bundle on $C\times\AA^1$.
Note that the determinant of $V$ is $\pr_C^{\ast}(\Omega^1_C)^{\vee}$.

Now we show that the line bundle of relative top-degree forms $\Omega^3_{X/\AA^1}$ is trivializable.
Observe that both of the natural maps below are smooth:
\[
    \xymatrix{
        X \ar[r]^-{p} & C\times \AA^1 \ar[r]^-{\pr_{\AA^1}} & C.
    }
\]
Therefore, we have a short exact sequence
\[
    0 \to p^{\ast}\Omega^1_{C\times\AA^1} \to \Omega^1_X \to \Omega^1_{X/(C\times\AA^1)} \to 0,
\]
and obtain a sequence of isomorphisms
\begin{equation}\label{eq:iso_1}
    \Omega^4_X \cong p^{\ast}\Omega^2_{C\times\AA^1} \otimes \Omega^2_{X/(C\times\AA^1)} \cong p^{\ast}\pr_C^{\ast}\Omega^1_C \otimes p^{\ast}\pr_C^{\ast}(\Omega^1_C)^{\vee} \cong \O_X,
\end{equation}
where we have used the fact that $\Omega^2_{C\times\AA^1} \cong \Omega^1_C\boxtimes \Omega^1_{\AA_1} = \pr_C^{\ast}\Omega^1_C$ and the fact that $\Omega^1_{X/(C\times\AA^1)}$ is isomorphic to the dual of $V$, since $V$ is isomorphic to the normal bundle of the zero-section.

We have another short exact sequence
\[
    0 \to p^{\ast}\pr_{\AA^1}^{\ast}\Omega^1_{\AA^1} \to \Omega^1_X \to \Omega^1_{X/\AA^1} \to 0,
\]
and thus we obtain an isomorphism
\begin{equation}\label{eq:iso_2}
    \Omega^4_X \cong \Omega^3_{X/\AA^1},
\end{equation}
since $\Omega^1_{\AA^1}$ is trivial.
Putting (\ref{eq:iso_1}) and (\ref{eq:iso_2}) together, we deduce that there exists a volume form
\[
    \Omega^3_{X/\AA^1} \simeq \O_X.
\]

If $L$ is an ample line bundle on $C$, then its pullback to $X$ via $X \to C\times\AA^1 \to C$ is a relatively ample line bundle with respect to $X/\AA^1$, (by the same argument used in the previous example). 
Therefore, we may use such a line bundle to define semistability conditions.